\newtheorem{théo}{Theorem}[section]
\newtheorem{prop}[théo]{Proposition}
\newtheorem{lemm}[théo]{Lemma}
\newtheorem{corr}[théo]{Corollary}
\newtheorem{défi}[théo]{Definition}
\theoremstyle{definition}
\newtheorem{rema}[théo]{Remark}
\newtheorem{hypo}{Hypothesis}[section]
\def\transpose{{}^t\!}
\DeclareMathOperator{\tr}{Tr}
\newcommand{\E}{\mathbb{E}}
\newcommand{\der}{\mathrm{d}}
\newcommand{\R}{\mathbb{R}}
\newcommand{\G}{\overline{G}}
\newcommand{\Pb}{\mathbb{P}}
\newcommand{\N}{\mathbb{N}}
\newcommand{\Q}{\mathbb{Q}}
\newcommand{\LL}{\mathscr{L}}
\newcommand{\F}{\mathscr{F}}
\numberwithin{equation}{section}
\begin{document}

\title{A probabilistic approach to large time behaviour of viscosity solutions of parabolic equations  with Neumann boundary conditions}

\author{Ying Hu\thanks{IRMAR, Université Rennes 1, Campus de Beaulieu, 35042 Rennes Cedex, France (ying.hu@univ-rennes1.fr), partially supported by Lebesgue center of mathematics ("Investissements d'avenir"
program - ANR-11-LABX-0020-01)} \and Pierre-Yves Madec\thanks{IRMAR, Université Rennes 1, Campus de Beaulieu, 35042 Rennes Cedex, France, partially supported by Lebesgue center of mathematics ("Investissements d'avenir"
program - ANR-11-LABX-0020-01)} }


\date{September 15, 2015}

\maketitle

\textsc{\textbf{Abstract:}}\ This paper is devoted to the study of the large time behaviour of viscosity solutions of parabolic equations  with Neumann boundary conditions. This work is the sequel of \cite{LARGE_TIME_BEHAVIOUR_HU_MADEC_RICHOU} in which a probabilistic method was developed to show that the solution of a parabolic semilinear PDE behaves like a linear term $\lambda T$ shifted with a function $v$, where $(v,\lambda)$ is the solution of the ergodic PDE associated to the parabolic PDE. We adapt this method in finite dimension by a penalization method in order to be able to apply an important basic coupling estimate result and with the help of a  regularization procedure in order to avoid the lack of regularity of the coefficients in finite dimension. The advantage of our method is that it gives an explicit rate of convergence.
\bigskip\\
\textit{Keywords}: Backward stochastic differential equations; Ergodic backward stochastic differential equations; HJB equations; Large time bahaviour; Viscosity solutions.
\bigskip\\
\textit{AMS classification}: 35B40, 35K10, 60H30, 93E20.

\section{Introduction}
We are concerned with the large time behaviour of solutions of the Cauchy problem with Neumann boundary conditions:
\begin{align}\label{HJB Neumann cauchy changement de temps}
\left\{ 
\begin{array}{ll}
\frac{\partial u(t,x)}{\partial t} = \LL u(t,x) + f(x,\nabla u(t,x)\sigma) , & \forall (t,x) \in \R_+ \times G,\\
\frac{\partial u(t,x)}{\partial n} + g(x) = 0, &\forall (t,x) \in \R_+ \times \partial G, \\
u(0,x) = h(x),&\forall x \in \G,
\end{array}
\right.
\end{align}
where, at least formally, $\forall \psi : \G \rightarrow \R$,
\begin{align*}
(\LL \psi)(x) = \frac{1}{2}\tr (\sigma \transpose \sigma \nabla^2 \psi(x)) + \langle b(x), \nabla \psi(x) \rangle,
\end{align*}
and $G = \{ \phi >0 \}$ is a bounded convex open set of $\R^d$ with regular boundary. $u : \R_+ \times \G \rightarrow \R$ is the unknown function. 
We will assume that $b$ is Lipschitz and $\sigma$ is invertible. $h$ is continuous and  $g \in \mathscr{C}^1_{\text{lip}}(\G)$. Furthermore we will assume that the non-linear term $f(x,z) : \R^d \times \R^{1 \times d} \rightarrow \R$ is continuous in the first variable for all $z$ and there exists $C > 0$ such that for all $x \in \R^d$, $\forall z_1, z_2 \in \R^{1 \times d}$, $|f(x,z_1) - f(x,z_2)| \leq C|z_1 - z_2|$. Finally in order to obtain uniqueness for viscosity solutions of \eqref{HJB Neumann cauchy changement de temps}, we assume that $\partial G$ is $W^{3,\infty}$ and that there exists $ m \in \mathscr{C}((0,+\infty),\R)$, $m(0^+) = 0$ such that $\forall x,y \in \G, \forall z \in \R^{1 \times d}$,
\begin{align*}
|f(x,z) - f(y,z)| \leq m\left((1+|z|)|x-y|\right).
\end{align*}
A lot of papers deal with the large time behaviour of parabolic PDEs (see for e.g. \cite{NAMAH_ROQUEJOFFRE_CONVERGENCE}, \cite{FUJITA_LORETI_LONG_TIME_BEHAVIOR_QUADRATIC}, \cite{ISHII_ASYMPTOTIC_SOLUTIONS_LARGE_TIME_EUCLIDIAN}, \cite{FUJITA_ISHII_LORETI_ASYMPTOTIC},  \cite{ICHIHARA_SHEU_HAMILTON_JACOBI_BELLMAN} or \cite{CFH}), but there are not a lot of them which deal with Neumann boundary conditions. In \cite{BENACHOUR_DABULEANU_LARGE_TIME}, Benachour and Dabuleanu study the large time behaviour of the Cauchy problem with zero Neumann boundary condition
\begin{align}\label{EDP benachour dabuleanu}
\left\{ 
\begin{array}{ll}
\frac{\partial u(t,x)}{\partial t} = \Delta u(t,x) + a |\nabla u(t,x)|^p , & \forall (t,x) \in \R_+ \times G,\\
\frac{\partial u(t,x)}{\partial n}  = 0, &\forall (t,x) \in \R_+ \times \partial G, \\
u(0,x) = h(x),&\forall x \in \overline{G},
\end{array}
\right.
\end{align}
where $a \in \R$, $a \neq 0$, $p > 0$ and $G$ is a bounded open set with smooth boundary of $\mathscr{C}^3$ class. The large time behaviour depends on the exponent $p$. If $p \in (0,1)$, and if $h$ is a periodic function, then the solution is constant from a finite time. That is, there exist $T^* > 0$ and $c \in \R$ such that $u(t,x) = c$, for all $t > T^*$. When $p \geq 1$, any solution of \eqref{EDP benachour dabuleanu} converges uniformly to a constant, as $t \rightarrow + \infty$.

In \cite{ISHII_LONG_TIME_ASYMPTOTIC_SOLUTIONS_CONVEX_HJB}, Ishii establishes a result about the large time behaviour of a parabolic PDE in a bounded set with an Hamiltonian of first order $H(x,p)$, convex and coercive in $p$ and with Neumann boundary coniditons.

In \cite{BARLES_DA_LIO_BOUNDARY_ERGODIC_PROBLEM}, Barles and Da Lio give a result for the large time behaviour of \eqref{HJB Neumann cauchy changement de temps}. Moreover, the result about the large time behaviour has been  improved by Da Lio in \cite{DA_LIO_LARGE_TIME_NEUMANN} under the same hypotheses. In this last paper, the author studies the large time behaviour of non linear parabolic equation with Neumann boundary conditions on a smooth bounded domain $\G$:
\begin{align}\label{PDE DA LIO}
\left\{
\begin{array}{ll}
\frac{\partial u(t,x)}{\partial t} + F(x,\nabla u(t,x) , \nabla^2 u(t,x)) = \lambda, & \forall (t,x) \in \R_+ \times G,\\
L(x,\nabla u(t,x)) = \mu, & \forall (t,x) \in \R_+ \times \G,\\
u(0,x) = h(x), & \forall x \in G.
\end{array}
\right.
\end{align}
The spirit of this paper is slightly different from our work. Indeed, the result says that $\forall \lambda \in \R$, there exists $\mu \in \R$ such that \eqref{PDE DA LIO} has a continuous viscosity solution. Moreover there exists a unique $\widetilde{\lambda}$ such that $\mu(\widetilde{\lambda}) = \widetilde{\lambda}$ for which the solution of \eqref{PDE DA LIO} remains uniformly bounded in time $\widetilde{u}$. Then, there exists $\widetilde{u}_\infty$ solution of the ergodic PDE associated to \eqref{PDE DA LIO} such that 
\begin{align*}
\widetilde{u}(t,x) \underset{t \rightarrow + \infty}{ \longrightarrow} \widetilde{u}_{\infty}(x),~~~~\text{uniformly in } \G.
\end{align*}

We mention that no convergence rates are given in the above papers \cite{BENACHOUR_DABULEANU_LARGE_TIME,BARLES_DA_LIO_BOUNDARY_ERGODIC_PROBLEM,DA_LIO_LARGE_TIME_NEUMANN}.

Let us now state our main idea and result. Our method is purely probabilistic, which can be described as follows. First, let us consider $(X_t^x,K_t^x)_{t \geq 0}$ the solution of the following reflected SDE with values in $\overline{G} \times \R_+$,
\begin{align*}
\left\{ 
\begin{array}{ll}
 X_t^x = x + \int_0^t b(X_s^x) \der s +\int_0^t \nabla \phi(X_s^x) \der K_s^x + \int_0^t \sigma \der W_s, & t \geq 0, \\
K_t^x = \int_0^t \mathds{1}_{\left\{ X_s^x \in \partial G \right\}} \der K_s^x, ~~~~\forall t \geq 0,
\end{array}
\right.
\end{align*}
where $W$ is an $\R^d$-valued standard Brownian motion.
Let $(v,\lambda)$ be the solution of the following ergodic PDE,
\begin{align*}
\left\{
\begin{array}{l}
\LL v(x) + f(x,\nabla v(x) \sigma) -\lambda = 0, ~~~~\forall x \in G,\\
\frac{\partial v(t,x)}{\partial n} + g(x) = 0, ~~~~\forall x \in \partial G.
\end{array}
\right.
\end{align*}
Let $(Y^{T,x},Z^{T,x})$ be the solution of the BSDE:
\begin{align*}
\left\{
\begin{array}{l}
\der Y_s^{T,x} = -f(X_s^x,Z_s^{T,x}) \der s  - g(X_s^x) \der K_s^x + Z_s^{T,x} \der W_s,\\
Y_T^{T,x} = h(X_T^x),
\end{array}
\right.
\end{align*}
and $(Y^x,Z^x,\lambda)$ be solution of the EBSDE:
\begin{align*}
\der Y_s^x = -(f(X_s^x,Z_s^x)-\lambda) \der s - g(X_s^x) \der K_s^x  + Z_s^x \der W_s.
\end{align*}
Then we have the following probabilistic representation:
\begin{align*}
\left\{
\begin{array}{l}
Y_s^{T,x} = u(T-s,X_s^x),\\
Y_s^x = v(X_s^x).
\end{array}
\right.
\end{align*}
Then, in order to apply the method exposed in \cite{LARGE_TIME_BEHAVIOUR_HU_MADEC_RICHOU}, we penalize and regularize the reflected process in order to apply the basic coupling estimates. Then, the use of a stability argument for BSDE helps us to conclude. Finally, we deduce that there exists a constant $L \in \R$ such that for all $x \in \R^d$, 
\begin{align*}
Y_0^{T,x} - \lambda T - Y_0^x \underset{T \rightarrow + \infty}{\longrightarrow} L,
\end{align*}
i.e.
\begin{align*}
u(T,x) - \lambda T - v(x) \underset{T \rightarrow + \infty}{\longrightarrow} L.
\end{align*}
Our method also gives a rate of convergence:
\begin{align*}
|u(T,x) - \lambda T - v(x)| \leq Ce^{-\hat{\eta} T}.
\end{align*}

The main contributions of this paper are: (1) uniqueness of solution to the EBSDE by regularization of coefficients and by applying coupling estimate (see Theorem \ref{Uniqueness EBSDE} in Section 3);
(2) a probabilistic method to prove the uniqueness of solution to ergodic PDE (see Lemma \ref{Uniqueness of ergodic viscosity solutions} in Section 3); (3) an exponential rate of convergence for the large time 
behaviour of viscosity solutions of parabolic equations  with Neumann boundary conditions, which seems to be new comparing with convergence results in \cite{BENACHOUR_DABULEANU_LARGE_TIME,BARLES_DA_LIO_BOUNDARY_ERGODIC_PROBLEM,DA_LIO_LARGE_TIME_NEUMANN}.

\medskip

The paper is organized as follows: In section 2, we introduce some notations. In section 3, we recall some existence and uniqueness results about a perturbed SDE, a reflected SDE, a BSDE and an EBSDE that will be useful for what follow in the paper. We recall how such BSDE and EBSDE are linked with PDE. In section 4, we study the large time behaviour of the solution of the BSDE taken at initial time when the horizon $T$ of the BSDE increases. Then, we obtain a more precise result with an explicit rate of convergence in the Markovian case. In section 5, we apply our results to an optimal ergodic control problem.

\section{Notations}
We introduce some notations. Let $E$ be an Euclidian space. We denote by $\langle\cdot,\cdot \rangle$ its scalar product and by $|\cdot|$ the associated norm. We denote by $B(x,M)$ the ball of center $x \in E$ and radius $M>0$.  Given $\phi \in B_b(E)$, the space of bounded and measurable functions $\phi : E \rightarrow \R$, we denote by $||\phi ||_0 = \sup_{x \in E} |\phi(x)|$. If a function $f$ is continuous and defined on a compact and convex subset $\G$ of $\R^d$, we define $f_{\R^d} := f(\Pi(x))$ where $\Pi$ is the projection on $\G$. Note that $f_{\R^d}$ is continuous and bounded. $\mathscr{C}^k_{\text{lip}}(\G)$ denotes the set of the functions of class $\mathscr{C}^k$ whose partial derivatives of order $k$ are Lipschitz functions.

Given a probability space $(\Omega, \F, \Pb)$ with a filtration $\F_t$, we consider the following classes of stochastic processes.
\smallskip \\
1. $L_\mathscr{P}^p(\Omega, \mathscr{C}([0,T];E))$, $p \in [1,\infty)$, $T > 0$, is the space of predictable processes $Y$ with continuous paths on $[0,T]$ such that 
\begin{align*}
|Y|_{L_\mathscr{P}^p(\Omega, \mathscr{C}([0,T];E))} = \left(\E \sup_{t \in [0,T]} |Y_t|^p\right)^{1/p} < \infty.
\end{align*}
\smallskip \\
2. $L_\mathscr{P}^p(\Omega, {L}^2([0,T];E))$, $p \in [1,\infty)$, $T > 0$, is the space of predictable processes $Y$ on $[0,T]$ such that 
\begin{align*}
|Y|_{L_\mathscr{P}^p(\Omega, L^2([0,T],E))}  = \left\{\E \left( \int_0^T |Y_t|^2 \der t \right)^{p/2} \right\}^{1/p} < \infty.
\end{align*}
\smallskip \\
3. $L_{\mathscr{P},\text{loc}}^2(\Omega, {L}^2([0,\infty);E))$ is the space of predictable processes $Y$ on $[0,\infty)$ which belong to the space $L_\mathscr{P}^2(\Omega,L^2([0,T];E))$ for every $T >0$. We define in the same way $L_{\mathscr{P},\text{loc}}^p(\Omega, \mathscr{C}([0,\infty);E))$.

In the sequel, we consider a complete probability space $(\Omega, \F, \Pb)$ and a standard  Brownian motion denoted by $(W_t)_{t \geq 0}$  with values in $\R^d$. $(\F_t)_{t \geq 0}$ will denote the natural filtration of $W$ augmented with the family of $\Pb$-null sets of $\F$.

In this paper, $C$ denotes a generic constant for which we specify the dependency on some parameters when it is necessary to do so.
In this paper, we will consider only continuous viscosity solutions. 
\section{Preliminaries}
\subsection{The perturbed forward SDE}
Let us consider the following stochastic differential equation with values in  $\R^d$:
\begin{align}\label{SDE}
\left\{
\begin{array}{l}
\der X_t = d(X_t) \der t + b(t,X_t)\der t + \sigma \der W_t, ~~t \geq 0,\\
X_0 = x \in \R^d.
\end{array}
\right.
\end{align}
We will assume the following about the coefficients of the SDE:
\begin{hypo}\label{hypo SDE}
\begin{enumerate}
\item $d : \R^d \rightarrow \R^d$ is locally Lipschitz, strict dissipative (i.e. there exists $\eta >0$ such that for every $x,y \in \R^d$, $\langle d(x)-d(y),x-y \rangle \leq -\eta |x-y|^2$) and with polynomial growth (i.e. there exists $\mu>0$ such that for every $x \in \R^d$, $|d(x)| \leq C(1+|x|^{\mu})$).
\item $b : \R_+\times\R^d \rightarrow \R^d$ is bounded and measurable.
\item $\sigma \in \R^{d \times d}$ is invertible.
\end{enumerate}
\end{hypo}

\begin{défi}
We say that the SDE $(\ref{SDE})$ admits a weak solution if there exists a new $\F$-Brownian motion $(\widehat{W}^x)_{t \geq 0}$ with respect to a new probability measure $\widehat{\Pb}$ (absolutely continuous with respect to $\Pb$), and an $\F$-adapted process $(\widehat{X}^x)_{t \geq 0}$ with continuous trajectories for which $(\ref{SDE})$ holds with $(W_t)_{t\geq 0}$ replaced by $(\widehat{W}_t^x)_{t \geq 0}$.
\end{défi}
\begin{lemm}
Assume that Hypothesis \ref{hypo SDE} holds true and that $b(t,\cdot)$ is  Lipschitz uniformly w.r.t. $t \geq 0$. Then for every $x\in \R^d$, equation (\ref{SDE}) admits a unique strong solution, that is, an adapted $\R^d$-valued process denoted by $X^x$ with continuous paths satisfying $\Pb$-a.s.,
\begin{align*}
X_t^x = x + \int_0^t d(X_s^x) \der s + \int_0^t b(s,X_s^x) \der s + \int_0^t  \sigma \der W_s, ~~ \forall t \geq 0.
\end{align*}
Furthermore, we have the following estimate: $\forall s\ge 0$,
\begin{align}\label{estimee 1 sur X}
\E [|X_s^x|^p] \leq  C(1+|x|^p).
\end{align}
If $b$ is only bounded and measurable then there exists a weak solution $(\widehat{X},\widehat{W})$ and uniqueness in law holds. Furthermore, $(\ref{estimee 1 sur X})$
still holds (with respect to the new probability measure).
\end{lemm}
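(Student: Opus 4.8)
The plan is to treat the two regimes of the statement separately, using pathwise arguments when $b(t,\cdot)$ is Lipschitz and a Girsanov change of measure when $b$ is merely bounded and measurable, the common engine being an a priori moment bound extracted from the strict dissipativity of $d$. First I would establish pathwise uniqueness in the Lipschitz case. Given two solutions $X^x$ and $\widetilde{X}^x$ driven by the same $W$ and started from the same $x$, the difference $\Delta_t = X_t^x - \widetilde{X}_t^x$ carries no martingale part because $\sigma$ is a constant matrix, so that
\begin{align*}
\frac{\der}{\der t}|\Delta_t|^2 = 2\langle \Delta_t, d(X_t^x) - d(\widetilde{X}_t^x)\rangle + 2\langle \Delta_t, b(t,X_t^x) - b(t,\widetilde{X}_t^x)\rangle \leq 2(L-\eta)|\Delta_t|^2,
\end{align*}
where $L$ is the Lipschitz constant of $b$ and $\eta$ is the dissipativity constant of $d$; since the one-sided bound on $d$ is global (even though $d$ is only locally Lipschitz), Gronwall's lemma forces $\Delta\equiv 0$.

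For existence, local Lipschitz continuity of $d$ together with the uniform Lipschitz property of $b(t,\cdot)$ yields, by classical theory, a unique strong solution up to an explosion time $\tau = \lim_n \tau_n$ with $\tau_n = \inf\{t : |X_t^x|\geq n\}$. The crux is to rule out explosion, and this is exactly where dissipativity enters: applying It\^o's formula to $|X_t^x|^2$ on $[0,t\wedge\tau_n]$, using the dissipativity inequality at $y=0$, namely $\langle d(x),x\rangle \leq -\eta|x|^2 + \langle d(0),x\rangle$, together with the boundedness of $b$ and Young's inequality, gives
\begin{align*}
\frac{\der}{\der t}\E\!\left[|X_{t\wedge\tau_n}^x|^2\right] \leq -\eta\,\E\!\left[|X_{t\wedge\tau_n}^x|^2\right] + C',
\end{align*}
where $C'$ depends only on $\eta$, $|d(0)|$, $\|b\|_0$ and $\tr(\sigma\transpose\sigma)$. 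Gronwall's lemma yields a bound uniform in both $n$ and $t$, which prevents explosion ($\tau=\infty$ a.s.) and hence produces the global solution; letting $n\to\infty$ by Fatou delivers \eqref{estimee 1 sur X} for $p=2$, \emph{uniformly in $s$}. Higher moments follow from the same It\^o computation applied to $|X_t^x|^p$ and lower moments from Jensen's inequality.

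When $b$ is only bounded and measurable I would remove the Lipschitz requirement by Girsanov's theorem. Let $X^0$ denote the strong solution (from the first part) of \eqref{SDE} with $b\equiv 0$, and set $\theta_t = \sigma^{-1} b(t,X_t^0)$, which is bounded because $\sigma$ is invertible and $b$ is bounded, so Novikov's condition holds on every $[0,T]$. Defining $\widehat{\Pb}$ through the exponential martingale of $\theta$ and $\widehat{W}_t = W_t - \int_0^t \theta_s\,\der s$, Girsanov's theorem makes $\widehat{W}$ a $\widehat{\Pb}$-Brownian motion and, since $\sigma\theta_t = b(t,X_t^0)$, turns $X^0$ into a weak solution of the full equation \eqref{SDE}. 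Uniqueness in law follows by running the transformation backwards: any weak solution, after the inverse change of measure removing the drift $b$, becomes a solution of the $b\equiv 0$ equation, which is unique in law, and the law of the original process is a fixed measurable functional of that one.

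Finally, the estimate under $\widehat{\Pb}$ needs no new idea: under $\widehat{\Pb}$ the process $X^0$ genuinely solves \eqref{SDE} with the $\widehat{\Pb}$-Brownian motion $\widehat{W}$, and the It\^o computation of the second step used only the boundedness of $b$ and the dissipativity of $d$, both still in force, so repeating it verbatim under $\widehat{\Pb}$ gives \eqref{estimee 1 sur X} with respect to the new measure. I expect the main obstacle to be the existence step under the weak regularity of $d$: since $d$ is only locally Lipschitz with polynomial growth, the classical global Lipschitz theory does not apply directly, and one must marry local existence with the dissipativity-driven a priori bound both to exclude explosion and, simultaneously, to secure the \emph{uniformity in time} of the moment estimate, which is the feature exploited repeatedly in the coupling arguments later in the paper.
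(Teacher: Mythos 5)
Your proposal is correct and follows essentially the same route as the paper: the paper simply cites Has'minski\u{\i}/Mao for strong well-posedness (whose proofs rest on exactly your combination of local Lipschitz continuity plus a dissipativity-driven Lyapunov bound excluding explosion), invokes It\^o's formula for the moment estimate \eqref{estimee 1 sur X}, and obtains the weak case by the same Girsanov transformation you describe. The only difference is that you work out in detail the standard arguments the paper delegates to references, including the uniform-in-time character of the moment bound.
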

\begin{proof}
For the first part of the lemma see \cite{Hasminskii_STOCHASTIC_STABILITY}, Theorem 3.3 in Chapter 1 or \cite{MAO_STOCHASTIC}, Theorem 3.5.  Estimates (\ref{estimee 1 sur X})
is a simple consequence of Itô's formula.
Weak existence and uniqueness in law are a direct consequence of a Girsanov's transformation.
\end{proof}

We define the Kolmogorov semigroup associated to Eq. (\ref{SDE}) as follows: $\forall \phi : \R^d \rightarrow \R$ measurable with polynomial growth,
\begin{align*}
\mathscr{P}_t[\phi](x) = \E \phi(X_t^x).
\end{align*}

\begin{lemm}[Basic coupling estimate]\label{basic coupling estimates}
Assume that Hypothesis \ref{hypo SDE} holds true and that $b(t,\cdot)$ is Lipschitz uniformly w.r.t. $t \geq 0$. Then there exists $\hat{c}> 0$ and $\hat{\eta}>0$ such that for all $\phi : \R^d \rightarrow \R$ measurable and bounded,
\begin{align}\label{equation basic coupling estimate}
|\mathscr{P}_t[\phi](x) - \mathscr{P}_t[\phi](y)| \leq \hat{c}(1+|x|^2+|y|^2)e^{- \hat{\eta} t}\sup_x|\phi(x)|.
\end{align}
We stress the fact that $\hat{c}$ and $\hat{\eta}$ depend on $b$ only through $\sup_{t \geq 0} \sup_{x \in \R^d}|b(t,x)|$.
\end{lemm}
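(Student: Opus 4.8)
The plan is to read the inequality as an exponential ergodicity statement in total variation for the (time-inhomogeneous) Markov process $X^x$, and to obtain it from a Lyapunov (drift) condition supplied by the dissipativity of $d$, together with a minorization (Doeblin) condition supplied by the invertibility of $\sigma$. First I would reduce to a total-variation estimate: since $\phi$ is only bounded and measurable, continuity cannot be used, and one writes
\[
|\mathscr{P}_t[\phi](x) - \mathscr{P}_t[\phi](y)| \leq \|\phi\|_0 \, \bigl\| \mathrm{Law}(X_t^x) - \mathrm{Law}(X_t^y) \bigr\|_{TV}.
\]
It then suffices to prove $\| \mathrm{Law}(X_t^x) - \mathrm{Law}(X_t^y) \|_{TV} \leq \hat c\,(1+|x|^2+|y|^2)\,e^{-\hat\eta t}$, and a coalescent coupling realizing the total variation distance is what will generate the exponential factor $e^{-\hat\eta t}$.

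Next I would establish the drift condition with the quadratic Lyapunov function $V(x) = 1 + |x|^2$. Writing $\mathscr{L}^{(t)}$ for the generator $\frac12 \tr(\sigma\transpose\sigma\,\nabla^2\cdot) + \langle d + b(t,\cdot), \nabla \cdot\rangle$, a direct computation gives $\mathscr{L}^{(t)} V(x) = \tr(\sigma\transpose\sigma) + 2\langle d(x)+b(t,x), x\rangle$. The strict dissipativity $\langle d(x)-d(0),x\rangle \leq -\eta|x|^2$, the crude bound $|\langle b(t,x),x\rangle| \leq (\sup_{s,z}|b(s,z)|)\,|x|$ (this is the decisive point: $b$ is controlled through its supremum, never through a Lipschitz constant), and Young's inequality then produce constants $c>0$ and $K$, depending on $b$ only through $\sup|b|$, with $\mathscr{L}^{(t)} V \leq -cV + K$ uniformly in $t \geq 0$. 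This is the differential form of \eqref{estimee 1 sur X}; it provides the return-to-a-sublevel-set mechanism and the polynomial prefactor $1+|x|^2+|y|^2$.

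Then I would prove the minorization: there exist $R>0$, $t_0>0$, $\alpha\in(0,1)$ and a probability measure $\nu$ such that the transition kernel satisfies $P_{t_0}(x,\cdot) \geq \alpha\,\nu$ for every $x$ in the sublevel set $\{V \leq R\}$. Since $\sigma$ is constant and invertible while $d$ (locally Lipschitz) and $b$ are bounded on $\{V\leq R\}$, the transition kernel admits a density bounded below on compacts by Gaussian (Aronson-type) estimates, uniformly in the time origin and with constants depending on $b$ only through $\sup|b|$, via a Girsanov comparison with the drift-free diffusion $\der U = d(U)\,\der t + \sigma\,\der W$. Feeding the drift and minorization conditions into the coupling argument underlying Harris's theorem, in a form that uses only that both sets of constants are uniform in the time origin (so as to accommodate the time-dependence of $b$), yields a contraction in the weighted norm $\|\cdot\|_V$ and hence $\| \mathrm{Law}(X_t^x) - \mathrm{Law}(X_t^y)\|_{TV} \leq \hat c\,(V(x)+V(y))\,e^{-\hat\eta t}$, which is the assertion.

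The main obstacle is the minorization step together with the time-inhomogeneity: one must produce a lower bound on the transition density that is genuinely uniform over starting points in $\{V\leq R\}$ \emph{and} over the time origin, and that depends on the drift only through $\sup|b|$. This is exactly where the invertibility of $\sigma$ is indispensable and where the Girsanov comparison with the drift-free Gaussian semigroup carries the argument. The remaining work is bookkeeping: verifying that every constant manufactured in both the drift estimate and the minorization estimate depends on $b$ solely through $\sup_{t\geq0}\sup_{x\in\R^d}|b(t,x)|$, which is what validates the final sentence of the statement.
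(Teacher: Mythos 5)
The paper gives no proof of this lemma at all---it is quoted from \cite{MADEC_ERGODIC}, whose argument (adapting the basic coupling estimate of \cite{DEBUSSCHE_HU_TESSITORE_ERGODIC_BSDE_WEAKLY_DISSIPATIVE}) is precisely the Harris-type coupling scheme you describe: a Lyapunov drift condition coming from the strict dissipativity of $d$, a minorization on a sublevel set coming from the invertibility of the constant $\sigma$ via a Girsanov comparison, and a coupling iteration whose constants are uniform in the time origin and depend on $b$ only through $\sup_{t}\sup_{x}|b(t,x)|$; so your proposal reconstructs the cited proof rather than departing from it. The only step to tighten is the minorization: since $d$ is only locally bounded, Aronson-type lower bounds require a localization (e.g.\ comparison with the process killed on exiting a larger ball), and the Girsanov comparison yields equivalence of laws, which must still be upgraded to a genuine bound $P_{t_0}(x,\cdot)\geq \alpha\,\nu$ (e.g.\ by conditioning the density on the endpoint and using Jensen's inequality)---standard but nontrivial bookkeeping, exactly at the point you flag as the main obstacle.
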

\begin{proof}
See \cite{MADEC_ERGODIC}.
\end{proof}

\begin{corr}\label{basic coupling estimates étendu}
Relation (\ref{equation basic coupling estimate}) can be extended to the case in which $b$ is only bounded and measurable and for all $t \geq 0$, there exists a uniformly bounded sequence of Lipschitz functions in $x$, $(b_n(t,\cdot))_{n \geq 1}$ (i.e. $\forall n \in \N$, $b_n(t,\cdot)$ is Lipschitz uniformly w.r.t. $t \geq 0$ and $\sup_n\sup_t \sup_x |b_n(t,x)| < + \infty$) such that
\begin{align*}
\lim_n b_n(t,x) = b(t,x),~~~~~ \forall t \geq 0, \forall x \in R^d.
\end{align*}
Clearly in this case in the definition of $\mathscr{P}_t[\phi]$ the mean value is taken with respect to the new probability measure $\widehat{\Pb}$.
\end{corr}
\begin{proof}
It is enough to adapt the proof of Corollary $2.5$ in \cite{DEBUSSCHE_HU_TESSITORE_ERGODIC_BSDE_WEAKLY_DISSIPATIVE}. The goal is to show that, if $\mathscr{P}^n$ denotes the Kolmogorov semigroup corresponding to equation (\ref{SDE}) but with $b$ replaced by $b_n$, then $\forall x \in \R^d$ , $\forall t \geq 0$,
\begin{align*}
\mathscr{P}_t^n[\phi](x) \underset{n \rightarrow + \infty}{ \longrightarrow} \mathscr{P}_t[\phi](x).
\end{align*}
\end{proof}

\begin{rema}\label{basic coupling estimate etendue double suite}
Similarly, if there exists a uniformly bounded sequence of Lipschitz functions $(b_{m,n}(t,\cdot))_{m \in \N, n \in \N}$ (i.e. $\forall n \in \N, \forall m \in \N$, $b_{m,n}(t, \cdot)$ is Lipschitz uniformly w.r.t. $t \geq 0$  and $\sup_m \sup_n \sup_t \sup_x |b_{m,n}(t,x)| < + \infty$) such that
\begin{align*}
\lim_m \lim_n b_{m,n}(t,x) = b(t,x), ~~~~~\forall t \geq 0, \forall x \in \R^d,
\end{align*} 
then, if $\mathscr{P}^{m,n}$ is the Kolmogorov semigroup corresponding to equation $(\ref{SDE})$ but with $b$ replaced by $b_{m,n}$, we have $\forall t \geq 0$, $\forall x \in \R^d$,
\begin{align*}
\lim_m \lim_n \mathscr{P}_t^{m,n}[\phi](x) = \mathscr{P}[\phi](x),
\end{align*}
which shows that relation  (\ref{equation basic coupling estimate}) still holds.
\end{rema}

We will need to apply the lemma above to some functions with particular form.
\begin{lemm}\label{approximation upsilon}
Let $f : \R^d \times \R^{1 \times d} \rightarrow \R$ be continuous in the first variable and Lipschitz in the second one and $\zeta, \zeta'$ be two continuous functions: $\R_+ \times \R^d \rightarrow \R^{1 \times d}$ be such that for all $s \geq 0$, $\zeta(s,\cdot)$ and $\zeta'(s,\cdot)$ are continuous. We define, for every $s \geq 0$ and $x \in \R^d$,
\begin{align*}
\Upsilon(s,x) = \left\{ 
\begin{array}{ll}
\frac{f(x,\zeta(s,x))-f(x,\zeta'(s,x))}{|\zeta(s,x)-\zeta'(s,x)|^2}\transpose (\zeta(s,x)-\zeta'(s,x)), &\text{if } \zeta(s,x) \neq \zeta'(s,x),\\
0, & \text{if } \zeta(s,x) = \zeta'(s,x).
\end{array}
\right.
\end{align*}
Then, there exists a uniformly bounded sequence of Lipschitz functions  $(\Upsilon_{m,n}(s,\cdot))_{m \in \N, n \in \N}$ (i.e., for every $m \in \N^*$ and $n \in \N^*$, $\Upsilon_{m,n}(s,\cdot)$ is Lipschitz and $\sup_m \sup_n \sup_s \sup_x |\Upsilon_{m,n}(s,x)| < + \infty$) such that for every $s \geq 0$ and for every $x \in \R^d$, 
\begin{align*}
\forall x \in \R^d, \lim_{m \rightarrow + \infty}\lim_{n \rightarrow + \infty} \Upsilon_{m,n}(s,x) = \Upsilon(s,x).
\end{align*}
\end{lemm}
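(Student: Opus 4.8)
The plan is to build the double-indexed family in two stages, matching the structure required by Remark~\ref{basic coupling estimate etendue double suite}: an outer index $m$ that replaces $\Upsilon$ by a genuinely \emph{continuous} bounded function $\Upsilon_m$, and an inner index $n$ that turns each $\Upsilon_m$ into a Lipschitz function by mollification. Throughout I would write $\delta(s,x) = \zeta(s,x)-\zeta'(s,x)$ and $D(s,x) = f(x,\zeta(s,x))-f(x,\zeta'(s,x))$. The first observation is that $\Upsilon$ is uniformly bounded: since $f$ is $C$-Lipschitz in its second variable, $|D(s,x)| \le C|\delta(s,x)|$, whence $|\Upsilon(s,x)| \le C$ at every point (trivially when $\delta=0$). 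Note also that $D(s,\cdot)$ and $\delta(s,\cdot)$ are continuous in $x$: continuity of $\delta$ comes from that of $\zeta,\zeta'$, and continuity of $x\mapsto f(x,\zeta(s,x))$ follows by splitting $f(x,\zeta(s,x))-f(x_0,\zeta(s,x_0))$ into a term handled by the Lipschitz bound in $z$ and a term handled by continuity of $f$ in $x$.

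For the outer stage I would set, for $m\ge 1$,
\begin{align*}
\Upsilon_m(s,x) = \frac{D(s,x)}{|\delta(s,x)|^2 + \frac1m}\,\transpose\delta(s,x).
\end{align*}
Since the denominator is bounded below by $1/m>0$ and $D,\delta$ are continuous, $\Upsilon_m(s,\cdot)$ is continuous on all of $\R^d$, and the estimate $|D|\le C|\delta|$ gives $|\Upsilon_m(s,x)| \le C|\delta|^2/(|\delta|^2+\tfrac1m) \le C$, uniformly in $m,s,x$. For the pointwise convergence $\Upsilon_m\to\Upsilon$ as $m\to\infty$ I would distinguish two cases: if $\delta(s,x)\neq 0$ the denominator tends to $|\delta|^2$ and $\Upsilon_m(s,x)\to\Upsilon(s,x)$; if $\delta(s,x)=0$ then $\transpose\delta=0$, so $\Upsilon_m(s,x)=0=\Upsilon(s,x)$ for every $m$. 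This is exactly the step that cures the discontinuity of $\Upsilon$ on $\{\zeta=\zeta'\}$: forcing the value through the regularized quotient makes the limit correct at \emph{every} point, whereas a direct mollification of $\Upsilon$ would only converge almost everywhere.

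For the inner stage, fix $m$ and let $\Upsilon_{m,n}(s,\cdot) = \Upsilon_m(s,\cdot)\ast\rho_n$ be the convolution in the $x$-variable against a standard mollifier $\rho_n$ on $\R^d$. Each $\Upsilon_{m,n}(s,\cdot)$ is smooth, hence Lipschitz, with Lipschitz constant bounded by $\|\Upsilon_m(s,\cdot)\|_0\,\|\nabla\rho_n\|_{L^1}\le C\|\nabla\rho_n\|_{L^1}$, a bound independent of $s$; thus the regularity is Lipschitz uniformly in $s$, as required to invoke the coupling estimate. Moreover $\|\Upsilon_{m,n}(s,\cdot)\|_0\le\|\Upsilon_m(s,\cdot)\|_0\le C$, giving the uniform bound $\sup_{m,n,s,x}|\Upsilon_{m,n}(s,x)|\le C$. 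Since $\Upsilon_m(s,\cdot)$ is continuous and bounded, $\Upsilon_{m,n}(s,x)\to\Upsilon_m(s,x)$ at every $x$ as $n\to\infty$. Combining the two stages yields, for every $s\ge0$ and every $x\in\R^d$, $\lim_m\lim_n\Upsilon_{m,n}(s,x)=\lim_m\Upsilon_m(s,x)=\Upsilon(s,x)$.

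The only genuine obstacle is the discontinuity of $\Upsilon$ on $\{\zeta(s,\cdot)=\zeta'(s,\cdot)\}$, which rules out a one-step mollification argument; the resolution is the continuous modification $\Upsilon_m$, engineered so that the correct value $0$ is attained there. The remaining verifications — continuity of $\Upsilon_m$, survival of the uniform bound through both regularizations, and $s$-uniformity of the Lipschitz constants — are routine once the estimate $|D|\le C|\delta|$ and the sup-norm contractivity of convolution are in hand.
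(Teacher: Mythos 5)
Your proof is correct. Note that the paper itself gives no argument here — it simply cites Lemma 3.5 of Debussche--Hu--Tessitore — and your two-stage construction (regularizing the denominator to $|\delta|^2+\tfrac1m$ to obtain a continuous bounded $\Upsilon_m$ that is exact on the set $\{\zeta=\zeta'\}$, then mollifying in $x$ to get Lipschitz functions with $s$-uniform constants and sup-norm bound $C$) is essentially the standard argument carried out in that reference, so you have in effect supplied the proof the paper outsources.
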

\begin{proof}
See the proof of Lemma $3.5$ in \cite{DEBUSSCHE_HU_TESSITORE_ERGODIC_BSDE_WEAKLY_DISSIPATIVE}.
\end{proof}

\subsection{The reflected SDE}
We consider a process $X_t^x$ reflected in $\G = \left\{ \phi >0 \right\}$. Let $(X_t^x,K_t^x)_{t \geq 0}$ denote the unknown of the following SDE:
\begin{align}\label{SDE_reflected}
\left\{ 
\begin{array}{ll}
 X_t^x = x + \int_0^t b(X_s^x) \der s +\int_0^t \nabla \phi(X_s^x) \der K_s^x + \int_0^t \sigma \der W_t, & t \in \R_+, \\
K_t^x = \int_0^t \mathds{1}_{\left\{ X_s^x \in \partial G \right\}} \der K_s^x.
\end{array}
\right.
\end{align}

\begin{hypo}\label{hypo SDE reflected}
\begin{enumerate}
\item $b : \G \rightarrow \R^d$ is Lipschitz.
\item $\sigma \in \R^{d \times d}$ is invertible.
\end{enumerate}
\end{hypo}

We will make the following assumptions about $G$.
\begin{hypo}\label{hypo_G}
\begin{enumerate}
\item $G$ is a bounded convex open set of $\R^d$.
\item $\phi \in \mathscr{C}^2_{\text{lip}}(\R^d)$ and $G = \left\{ \phi > 0 \right\}$, $\partial G = \left\{ \phi = 0 \right\}$ and $\forall x \in \partial G$, $|\nabla \phi(x)| = 1$.
\end{enumerate}
\end{hypo}

\begin{rema}\label{remarque transformation b en dissipatif}
Let us denote by $\Pi(x)$ the projection of $x \in \R^d$ on $\G$. Let us extend the definition of $b$ to $\R^d$ by setting, $\forall x \in \R^d$,
\begin{align*}
\widetilde{b}(x) &:= -x + (b(\Pi(x)) + \Pi(x)).
\end{align*}
Note that $d(x) :=  -x$ is strictly dissipative and that $p(x) := b(\Pi(x)) + \Pi(x)$ is Lipschitz and bounded. Therefore, $\widetilde{b}$ is weakly dissipative (a function is called weakly dissipative if it is a sum of a strictly dissipative function and a bounded one),
and satisfies Hypothesis \ref{hypo SDE}.
\end{rema}

Let us denote by $(X_t^{x,n})$ the solution of the following penalized SDE associated with \eqref{SDE_reflected}:
\begin{align*}
X_t^{x,n} = x + \int_0^t  \left[\widetilde{b}(X_s^{x,n}) +F_n(X_s^{x,n})\right]\der s + \int_0^t \sigma\der W_s,
\end{align*}
where $\forall x \in \R^d$, $F_n(x) = -2 n (x - \Pi(x))$.

\begin{lemm}
Assume that the Hypotheses \ref{hypo SDE reflected} and \ref{hypo_G} hold true. Then for every $x \in \G$ there exists a unique pair of processes $(X_t^x,K_t^x)_{t \geq 0}$ with values in $(\G \times \R_+)$ and which belongs to the space $L_{\mathscr{P},\text{loc}}^p(\Omega, \mathscr{C}([0,+\infty[;\R^d)) \times L_{\mathscr{P},\text{loc}}^p(\Omega, \mathscr{C}([0,+\infty[;\R_+))$ , $\forall p \in [1,+\infty [$, satisfying $(\ref{SDE_reflected})$ and such that
\begin{align*}
\eta_t^x := \int_0^t \nabla \phi(X_s^x) \der K_s^x, ~~~~\text{ has bounded variation on $[0,T]$},~\forall 0 \leq T < + \infty, ~~\eta_0^x = 0,
\end{align*}
and for all process $z$ continuous and progressively measurable taking values in the closure $\G$ we have
\begin{align*}
\int_0^T (X_s^x - z_s) \der K_s^x \leq 0,~~~~ \forall T \geq 0. 
\end{align*}
Finally, the following estimates holds for the convergence of the penalized process: for any $1 < q < p/2$, for any $T \geq 0$ there exists $C \geq 0$ such that
\begin{align*}
\E \sup_{0 \leq t \leq T} |X_t^{x,n} - X_t^x|^p \leq \frac{C}{n^q}.
\end{align*}
\end{lemm}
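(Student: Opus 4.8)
For the first part I would invoke the classical theory of the Skorokhod problem in a convex domain. Under Hypotheses \ref{hypo SDE reflected} and \ref{hypo_G} the set $G$ is bounded convex and, since $|\nabla\phi|=1$ on $\partial G$, the vector $\nabla\phi(x)$ is the inward unit normal there; thus \eqref{SDE_reflected} is a reflected SDE with normal reflection, for which strong existence and uniqueness of $(X^x,K^x)$ with $K^x$ continuous nondecreasing, together with the stated $L^p_{\mathscr P,\mathrm{loc}}$ integrability, is classical (Tanaka, Lions--Sznitman); the moment bounds are obtained as in the unreflected case. The bounded variation of $\eta^x$ is immediate, because its total variation on $[0,T]$ is at most $\int_0^T|\nabla\phi(X^x_s)|\der K^x_s=K^x_T<\infty$, as $\der K^x$ charges only $\{X^x_s\in\partial G\}$ where $|\nabla\phi|=1$. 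The Skorokhod inequality is then a pure consequence of convexity: on the support of $\der K^x$ one has $X^x_s\in\partial G$, and for any $\G$-valued process $z$ the supporting hyperplane property gives $\langle z_s-X^x_s,\nabla\phi(X^x_s)\rangle\ge0$, hence $\langle X^x_s-z_s,\nabla\phi(X^x_s)\rangle\le0$; integrating against $\der K^x\ge0$ yields the claim.

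\textbf{Reduction of the rate to an excursion estimate.} The penalized process $X^{x,n}$ is well defined: its drift splits as $\big(-x-2n(x-\Pi(x))\big)+\big(b(\Pi(x))+\Pi(x)\big)$, the first part being locally Lipschitz, strictly dissipative and of polynomial growth and the second bounded Lipschitz, so the strong existence result for \eqref{SDE} applies and estimate \eqref{estimee 1 sur X} holds with a constant uniform in $n$. Set now $U^n_t:=X^{x,n}_t-X^x_t$. Since both equations are driven by the same $\sigma\der W$, the process $U^n$ carries no martingale part and is of finite variation:
\begin{align*}
\der U^n_t=\big(\widetilde b(X^{x,n}_t)-\widetilde b(X^x_t)\big)\der t+F_n(X^{x,n}_t)\der t-\nabla\phi(X^x_t)\der K^x_t,
\end{align*}
using $\widetilde b=b$ on $\G$ (Remark \ref{remarque transformation b en dissipatif}). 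Applying the chain rule to $|U^n_t|^p$ (here $p>2$, so $|\cdot|^p$ is $\mathscr{C}^2$) produces three contributions. The $\widetilde b$-term is $\le C\int_0^t|U^n_s|^p\der s$ by the Lipschitz property of $\widetilde b$. The penalization term is favourable: decomposing $U^n_s=(X^{x,n}_s-\Pi X^{x,n}_s)+(\Pi X^{x,n}_s-X^x_s)$ and using the variational characterisation of the projection $\Pi$ onto the convex set $\G$ gives
\begin{align*}
\langle U^n_s,F_n(X^{x,n}_s)\rangle\le-2n\,|X^{x,n}_s-\Pi X^{x,n}_s|^2\le0.
\end{align*}
The reflection term is the only error: the same decomposition and convexity yield $-\langle U^n_s,\nabla\phi(X^x_s)\rangle\le|X^{x,n}_s-\Pi X^{x,n}_s|=\mathrm{dist}(X^{x,n}_s,\G)$ on the support of $\der K^x$. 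A Gronwall argument with respect to the measure $\der s+\der K^x_s$, followed by H\"older's inequality and the finiteness of all moments of $K^x_T$, reduces the whole problem to controlling integrals of powers of the penalized excursion $\mathrm{dist}(X^{x,n}_s,\G)$ against $\der K^x$.

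\textbf{The excursion estimate.} Put $\rho(x):=\tfrac12\,\mathrm{dist}(x,\G)^2=\tfrac12|x-\Pi x|^2$, a convex $\mathscr{C}^{1,1}$ function with $\nabla\rho(x)=x-\Pi x$ and $0\le D^2\rho\le I$. The decisive feature is that the penalization acts as a linear restoring force on every power of $\rho$: for $m\ge1$,
\begin{align*}
\langle\nabla(\rho^m)(X^{x,n}_s),F_n(X^{x,n}_s)\rangle=-4n\,m\,\rho(X^{x,n}_s)^m.
\end{align*}
Applying It\^o's formula to $\rho(X^{x,n}_t)^m$, this strictly negative term dominates the $\widetilde b$-contribution (linear growth, absorbed by Young's inequality and the $n$-uniform moment bounds on $X^{x,n}$) and the second-order term (bounded by $C\rho^{m-1}$ since $D^2\rho$ is bounded), leading to the recursion $\tfrac{\der}{\der t}\E\rho(X^{x,n}_t)^m\le-2nm\,\E\rho(X^{x,n}_t)^m+C\,\E\rho(X^{x,n}_t)^{m-1}$. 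As $X^{x,n}_0=x\in\G$ gives $\rho(X^{x,n}_0)=0$, an induction on $m$ starting from $\E\rho(X^{x,n}_t)\le C/n$ produces
\begin{align*}
\sup_{t\le T}\E\,\mathrm{dist}(X^{x,n}_t,\G)^{2m}\le\frac{C}{n^{m}}.
\end{align*}
Feeding these bounds into the reduction of the previous step and optimising the order of the moments used then yields $\E\sup_{t\le T}|X^{x,n}_t-X^x_t|^p\le C/n^q$.

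\textbf{Main obstacle.} The delicate point is to reach the \emph{sharp} exponent, namely every $q<p/2$, rather than a degraded one. Each passage from a pointwise-in-time moment of $\mathrm{dist}(X^{x,n}_s,\G)$ to a uniform-in-$t$ bound (through Burkholder--Davis--Gundy applied to the martingale appearing in the It\^o expansion of $\rho^m$), and each splitting of the boundary integral $\int_0^T\mathrm{dist}(X^{x,n}_s,\G)^\alpha\der K^x_s$ by H\"older, tends to lose a fixed amount in the exponent. The threshold $p/2$ itself is dictated by the Brownian scaling of the excursions outside $\partial G$, which have depth of order $n^{-1/2}$, so it is essentially optimal. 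Keeping the loss arbitrarily small—and thus reaching any $q<p/2$—requires running the moment estimates at a sufficiently high order and interpolating, and exploiting crucially that $U^n$ itself has no martingale part so that the only stochastic integral to be estimated is the one internal to the excursion estimate. This bookkeeping between the depth of the excursions and their integration against the boundary local time $\der K^x$ is where the real work lies.
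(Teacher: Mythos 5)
Your proposal is correct in outline, but it takes a genuinely different route from the paper, whose entire proof of this lemma is a one-line citation: ``See Lemma 4.2 in \cite{MADEC_ERGODIC}'', where exactly this statement (existence and uniqueness of the reflected pair together with the rate $C/n^q$ for the penalized approximation) is established. What you supply is essentially the self-contained penalization argument underlying that reference. The Skorokhod-problem part is sound (Tanaka/Lions--Sznitman for convex domains, the supporting-hyperplane inequality, and $|\nabla\phi|=1$ on $\partial G$ giving the bounded variation of $\eta^x$), and your key structural observations are all correct: the difference $U^n=X^{x,n}-X^x$ carries no martingale part, the variational characterisation of the projection makes the penalization term nonpositive, and on the support of $\der K^x$ the reflection term is dominated by $\mathrm{dist}(X^{x,n}_s,\G)$. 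Your excursion recursion also closes quantitatively: the sup-in-time moment of order $2m$ costs one power, $\E\sup_{t\le T}\mathrm{dist}(X_t^{x,n},\G)^{2m}\le C/n^{m-1}$ (BDG applied to the martingale in the It\^o expansion of $\rho^m$ contributes $C/n^{m-1/2}$, the drift and second-order terms contribute $C/n^{m-1}$), and feeding this into your reduction via H\"older with exponent $r=2m/p$ gives the rate $n^{-(m-1)p/(2m)}$, whose exponent $p/2-p/(2m)$ tends to $p/2$ as $m\to\infty$; together with the finiteness of all moments of $K_T^x$ (automatic since $G$ is bounded), this genuinely reaches every $q<p/2$, confirming that the ``interpolation at high order'' you describe is not just wishful. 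The trade-off between the two routes: the paper's citation is economical and inherits the statement verbatim from the weak-dissipativity framework of \cite{MADEC_ERGODIC}, while your version makes visible where the threshold $p/2$ comes from (the Brownian scaling of the excursions, depth of order $n^{-1/2}$) at the cost of the exponent bookkeeping you honestly flag in your final paragraph.
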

\begin{proof}
See Lemma 4.2 in \cite{MADEC_ERGODIC}.  
\end{proof}

\subsection{The BSDE}
Let us fix $T>0$ and let us consider the following BSDE in finite horizon for an unknown process $(Y_s^{T,t,x},Z_s^{T,t,x})_{s \in [t,T]}$ with values in $\R \times \R^{1 \times d}$:
\begin{align}\label{BSDE}
Y_s^{T,t,x} = \xi^T + \int_s^T f(X_r^{t,x},Z_r^{T,t,x}) \der r + \int_s^T g(X_r^{t,x}) \der K_r^{t,x} - \int_s^T Z_r^{T,t,x} \der W_r,~~~~ \forall s \in [t,T],
\end{align}
where $(X_s^{t,x},K_s^{t,x})_{s \in [t,T]}$ is the solution of the SDE (\ref{SDE_reflected}) starting from $x$ at time $t$. If $t=0$, we use the following standard notations $X_s^{x} = X_s^{0,x}$,$K_s^{x} = K_s^{0,x}$, $Y_s^{T,x} := Y_s^{T,0,x}$ and $Z_s^{T,x} = Z_s^{T,0,x}$.
We will assume the following assumptions.
\begin{hypo}[Path dependent case]\label{hypo_path_dependent}
There exists $C > 0$, such that the function $f : \G \times \R^{1 \times d} \rightarrow \R$ and $\xi^T$ satisfy:
\begin{enumerate}
\item $\xi^T$ is a real-valued random variable $\mathscr{F}_T$ measurable and $|\xi^T| \leq C$.
\item $\forall x \in \G$, $\forall z_1, z_2 \in \R^{1 \times d}$, $|f(x,z_1) - f(x,z_2)| \leq C|z_1 - z_2|$.
\item $\forall z \in \R^{1 \times d}$, $f(\cdot,z)$ is continuous.
\item $g \in \mathscr{C}^1_{\text{lip}}(\G)$.
\end{enumerate}
\end{hypo}

\begin{lemm}
Assume that the Hypotheses \ref{hypo SDE reflected}, \ref{hypo_G} and \ref{hypo_path_dependent} hold true, then there exists a unique solution $(Y_s^{T,t,x},Z_s^{T,t,x}) \in$  $L_\mathscr{P}^2(\Omega, \mathscr{C}([0,T];\R)) \times L_\mathscr{P}^2(\Omega, {L}^2([0,T];\R^{1\times d}))$.
\end{lemm}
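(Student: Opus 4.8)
The plan is to remove the singular driving term $\int_s^T g(X_r^{t,x}) \der K_r^{t,x}$ by a linear change of unknown, thereby reducing \eqref{BSDE} to a standard BSDE with generator Lipschitz in $z$ and square-integrable data, to which the classical Pardoux--Peng theory applies. First I would set
\begin{align*}
\bar{Y}_s := Y_s^{T,t,x} + \int_t^s g(X_r^{t,x}) \der K_r^{t,x}, \qquad s \in [t,T].
\end{align*}
Differentiating formally, the $\der K$ contributions cancel, so that $(\bar{Y}, Z^{T,t,x})$ should solve
\begin{align*}
\bar{Y}_s = \bar{\xi} + \int_s^T f(X_r^{t,x}, Z_r^{T,t,x}) \der r - \int_s^T Z_r^{T,t,x} \der W_r, \qquad \bar{\xi} := \xi^T + \int_t^T g(X_r^{t,x}) \der K_r^{t,x}.
\end{align*}
Conversely, any solution of this BSDE yields a solution of \eqref{BSDE} by setting $Y_s^{T,t,x} := \bar{Y}_s - \int_t^s g(X_r^{t,x}) \der K_r^{t,x}$. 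The two problems are equivalent in the stated spaces because $s \mapsto \int_t^s g(X_r^{t,x}) \der K_r^{t,x}$ is a continuous adapted process of bounded variation that leaves $L_{\mathscr{P}}^2(\Omega, \mathscr{C}([t,T];\R))$ invariant and does not affect the $Z$ component.

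The second step is to verify the integrability hypotheses required by the Lipschitz BSDE theory. The generator $(r,z) \mapsto f(X_r^{t,x}, z)$ is uniformly Lipschitz in $z$ by Hypothesis \ref{hypo_path_dependent}(2), and since $f(\cdot, 0)$ is continuous on the compact set $\G$ by Hypothesis \ref{hypo_path_dependent}(3) while $X_r^{t,x} \in \G$, the process $r \mapsto f(X_r^{t,x}, 0)$ is bounded; hence $\E \int_t^T |f(X_r^{t,x},0)|^2 \der r < \infty$. For the terminal data, $\xi^T$ is bounded by Hypothesis \ref{hypo_path_dependent}(1), $g$ is bounded on $\G$ since $g \in \mathscr{C}^1_{\text{lip}}(\G)$, and the lemma on the reflected SDE provides $\E[\,|K_T^{t,x}|^p\,] < \infty$ for every $p \geq 1$. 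Therefore
\begin{align*}
\E[\bar{\xi}^2] \leq 2\E[(\xi^T)^2] + 2\|g\|_0^2\, \E[\,|K_T^{t,x}|^2\,] < \infty,
\end{align*}
so that $\bar{\xi} \in L^2(\Omega, \F_T)$.

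With these bounds in hand, the classical existence and uniqueness theorem for BSDEs with generator Lipschitz in $z$ and square-integrable terminal condition (Pardoux--Peng, or El Karoui--Peng--Quenez) yields a unique pair
\begin{align*}
(\bar{Y}, Z^{T,t,x}) \in L_{\mathscr{P}}^2(\Omega, \mathscr{C}([t,T];\R)) \times L_{\mathscr{P}}^2(\Omega, L^2([t,T];\R^{1 \times d})).
\end{align*}
Undoing the substitution produces the desired solution of \eqref{BSDE} in the same spaces, and uniqueness transfers back through the bijective linear change of unknown. The only genuinely non-routine point is the integrability of the local-time contribution $\int_t^T g(X_r^{t,x}) \der K_r^{t,x}$ appearing in $\bar{\xi}$: this is exactly where the $L^p$ moment estimates on the reflecting process $K^{t,x}$ from the reflected SDE lemma are essential. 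Everything else is a direct application of the standard theory.
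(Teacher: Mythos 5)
Your proof is correct, but it takes a genuinely different route from the paper, which disposes of this lemma in one line by citing Theorem 1.7 of Pardoux and Zhang \cite{PARDOUX_ZHANG_GENERALIZED} --- a theorem built precisely for generalized BSDEs carrying a $\der K$ term, so no reduction is performed there at all. Your argument instead absorbs the bounded-variation term into the terminal value via $\bar Y_s = Y_s^{T,t,x} + \int_t^s g(X_r^{t,x})\,\der K_r^{t,x}$ and then invokes only the classical Lipschitz theory. The computation is sound: the $\der K$ contributions do cancel, the generator $(r,z) \mapsto f(X_r^{t,x},z)$ is progressively measurable, uniformly Lipschitz in $z$, and has $f(X_r^{t,x},0)$ bounded since $\G$ is compact, and you correctly isolated the one nontrivial ingredient, namely $\bar\xi \in L^2$, which indeed rests on the boundedness of $g$ on $\G$ together with the moment bounds $\E[\,|K_T^{t,x}|^p\,] < \infty$ supplied by the reflected-SDE lemma; the bijectivity of the shift between solution classes is also as you say, since $s \mapsto \int_t^s g(X_r^{t,x})\,\der K_r^{t,x}$ is continuous, adapted, and square-integrable in supremum norm, and $K$ is fixed by the forward equation rather than part of the BSDE unknown. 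What each approach buys: the paper's citation is immediate and covers the general Pardoux--Zhang framework, where the boundary generator has the form $g(Y_s, X_s)\,\der K_s$; in that generality your substitution would \emph{not} decouple, and this is the structural feature your reduction exploits --- here $g$ depends only on $X$ (a $Y$-dependent driver $f$ would still survive the shift, since $f$ composed with $\bar Y_r - \int_t^r g\,\der K_r$ stays Lipschitz with adapted coefficients, but $Y$-dependence inside the $\der K$ integral would break it). In exchange, your proof is more elementary and self-contained: it needs only the standard Pardoux--Peng (or El Karoui--Peng--Quenez) theorem plus the $K$-moment estimates already stated in the paper, rather than the theory of generalized BSDEs.
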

\begin{proof}
See Theorem 1.7 in \cite{PARDOUX_ZHANG_GENERALIZED}.
\end{proof}

\begin{hypo}[Markovian case]\label{hypo BSDE markovien}
There exists $C >0$ such that
\begin{enumerate}
\item $\xi^T = h(X_T^x)$, where $h : \G \rightarrow \R $ is continuous.
\item $\forall x \in \G$, $\forall z,z' \in \R^{1 \times d}$, $|f(x,z)-f(x,z')|\leq C|z-z'|$.
\item $\forall z \in \R^{1 \times d}$, $f(\cdot,z)$ is continuous.
\item $g \in \mathscr{C}^1_{\text{lip}}(\G)$.
\end{enumerate}
\end{hypo}

Let us consider the following semilinear PDE:
\begin{align}\label{PDE finite horizon}
\left\{
\begin{array}{ll}
\frac{\partial u(t,x)}{\partial t} + \LL u(t,x) + f(x,\nabla u(t,x) \sigma) = 0, & \forall (t,x) \in [0,T]\times G,\\
\frac{\partial u(t,x)}{\partial n} + g(x) = 0, &\forall (t,x) \in [0,T] \times \partial G, \\
u(T,x) = h(x), &\forall x \in G,
\end{array}
\right.
\end{align}
where $\LL u(t,x) = \frac{1}{2} \tr(\sigma \transpose \sigma \nabla^2 u(t,x)) +  \langle b(x) , \nabla u(t,x) \rangle$.

\begin{lemm}[Existence]\label{lemme exitence sol de viscosité parabolique}
Assume that the Hypotheses \ref{hypo SDE reflected}, \ref{hypo_G} and \ref{hypo BSDE markovien} hold true, then there exists a continuous viscosity solution to the PDE (\ref{PDE finite horizon}) given by
\begin{align*}
u_T(t,x) = Y_t^{T,t,x}.
\end{align*}
\end{lemm}
\begin{proof}
In our framework, $u_T(t,x) \in \mathscr{C}([0,T]\times \G ; \R)$. Indeed, first as in the proof of Theorem 3.1 in \cite{Richou_Ergodic_BSDE_PDe_Neumann}, we deduce the existence of a function $v^1 : \G \rightarrow \R$ which belongs to the space $\mathscr{C}^2_{\text{lip}}(\G)$ and which is solution of Helmholtz's equation for some $\alpha \in \R$,
\begin{align*}
\left\{
\begin{array}{ll}
\Delta v^1(x) - \alpha v^1(x) =  0,\\
\frac{\partial v^1(x)}{\partial n} + g(x) = 0.
\end{array}
\right.
\end{align*}
We set $Y_s^{1,t,x} = v^1(X_s^{t,x})$ and $Z_s^{1,t,x} = \nabla v^1(X_s^{t,x}) \sigma$. These processes verify, $\forall s \in [t,T]$,
\begin{align*}
Y_s^{1,t,x} = v^1(X_T^{t,x}) + \int_s^T [-\mathscr{L} v^1(X_r^{t,x})] \der r + \int_s^T g(X_r^{t,x}) \der K_r^{t,x} - \int_s^T Z_r^{1,t,x} \der W_r,
\end{align*}
where 
\begin{align*}
\LL v^1(x) = \frac{1}{2}\tr (\sigma \transpose \sigma \nabla^2 v^1(x)) + \langle b(x), \nabla v^1(x) \rangle.
\end{align*}
Then, if we define
\begin{align*}
&\widetilde{Y}_s^{T,t,x} = Y_s^{T,t,x} - v^1(X_s^{t,x}),\\
&\widetilde{Z}_s^{T,t,x} = Z_s^{T,t,x} - \nabla v^1(X_s^{t,x}) \sigma,
\end{align*}
$(\widetilde{Y}^{T,t,x},\widetilde{Z}^{T,t,x})$ satisfies the BSDE, $\forall s \in [t,T]$:
\begin{align*}
\widetilde{Y}_s^{T,t,x} = (h-v^1)(X_T^{t,x}) &+ \int_s^T \left[f(X_r^{t,x},\widetilde{Z}_r^{T,t,x} + \nabla v^1(X_r^{t,x}) \sigma) + \mathscr{L}v^1(X_r^{t,x}) \right] \der r\\
&- \int_s^T \widetilde{Z}_r^{T,t,x} \der W_r,
\end{align*}
which shows, since $v^1 \in \mathscr{C}^2_{\text{lip}}(\G)$, that $\left((t,x) \mapsto \widetilde{Y}_t^{T,t,x}\right)$ is continuous. To show that $u_T(t,x)$ is a viscosity solution of \eqref{PDE finite horizon} see \cite{PARDOUX_ZHANG_GENERALIZED}, Theorem 4.3.
\end{proof}
Uniqueness for solutions of \eqref{PDE finite horizon} holds under additional assumptions in our framework.
\begin{hypo}\label{hypo unicite parabolic PDE}
\begin{enumerate}
\item $\partial G$ is of class $W^{3,\infty}$.
\item $\exists m \in \mathscr{C}((0,+\infty),\R)$, $m(0^+) = 0$ such that $\forall x,y \in \G, \forall z \in \R^{1 \times d}$,
\begin{align*}
|f(x,z) - f(y,z)| \leq m\left((1+|z|)|x-y|\right).
\end{align*}
\end{enumerate}
\end{hypo}

\begin{lemm}[Uniqueness]\label{lemme unicite sol de viscosité parabolique}
Assume that the Hypotheses \ref{hypo SDE reflected}, \ref{hypo_G}, \ref{hypo BSDE markovien} and \ref{hypo unicite parabolic PDE} hold true. Then, uniqueness holds for viscosity solutions of \eqref{PDE finite horizon}.
\end{lemm}
\begin{proof}
See Theorem II.1 in \cite{BARLES_FULLY_NONLINEAR_NEUMANN}.
\end{proof}

\begin{rema}
By the following change of time: $\widetilde{u}_T(t,x) := u_T(T-t,x)$, we remark that $\widetilde{u}_T(t,x)$ is the unique viscosity solution of (\ref{HJB Neumann cauchy changement de temps}). Now remark that $\widetilde{u}_T(T,x) = u_T(0,x) = Y_0^{T,0,x} = Y_0^{T,x}$, therefore the large time behaviour of $Y_0^{T,x}$ is the same as that of the solution of equation (\ref{HJB Neumann cauchy changement de temps}).
\end{rema}

\subsection{The EBSDE}
In this section, we consider the following ergodic BSDE for an unknown process $(Y_t^x,Z_t^x,\lambda)_{t \geq 0}$ with values in $\R \times \R^{1\times d}\times \R$:
\begin{align}\label{EBSDE}
Y_t^x=Y_T^x+\int_t^T(f(X_s^x,Z_s^x) -\lambda)\der s + \int_t^T g(X_s^x) \der K_s^x - \int_t^T Z_s^x \der W_s,~~~~\forall 0 \leq t \leq T < + \infty.
\end{align}

\begin{hypo}\label{hypo EBSDE} There exists $C>0$ such that,
\begin{enumerate}
\item $\forall x \in \G$, $\forall z,z' \in \R^{1 \times d}$, $|f(x,z)-f(x,z')|\leq C|z-z'|$.
\item $\forall z \in \R^{1 \times d}$, $f(\cdot,z)$ is continuous.
\item $g \in \mathscr{C}^1_{\text{lip}}(\G)$.
\end{enumerate}
\end{hypo}
Without loss of generality, we assume that $0 \in \G$.
\begin{lemm}[Existence when Neumann boundary conditions are null]\label{lemm_existence_EBSDE_Neumann_null}
Assume that $g \equiv 0$ and that the Hypotheses \ref{hypo SDE reflected}, \ref{hypo_G} and \ref{hypo EBSDE} hold true. Then there exists a solution $(Y^{x},Z^x,\lambda) \in L_{\mathscr{P},\text{loc}}^2(\Omega, \mathscr{C}([0,+\infty[;\R)) \times L_{\mathscr{P},\text{loc}}^2(\Omega, {L}^2([0,+\infty[;\R^{1\times d})) \times \R$ to (\ref{EBSDE}). Moreover there exist $v : \G \rightarrow \R$ and $\xi : \G \rightarrow \R^{1 \times d}$ measurable  such that for every $x,y \in \G$, for all $t \geq 0$,
 \begin{align*}
&Y_t^{x} = v(X_t^x), Z_t^x = \xi(X_t^x),\\
&v(0) = 0,\\
&|v(x) - v(y)| \leq C,\\
&|v(x) - v(y)| \leq C|x-y|.
\end{align*}
\end{lemm}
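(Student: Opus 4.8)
The plan is to run the classical vanishing--discount scheme for ergodic BSDEs, feeding in the forward dynamics through the penalized approximation of Section~3.2 so that the basic coupling estimate of Lemma~\ref{basic coupling estimates} becomes available. \textbf{Step 1 (discounted problem).} For $\alpha>0$ I would first solve the infinite--horizon BSDE
\[
Y_t^{\alpha,x}=Y_T^{\alpha,x}+\int_t^T\bigl(f(X_s^x,Z_s^{\alpha,x})-\alpha Y_s^{\alpha,x}\bigr)\der s-\int_t^T Z_s^{\alpha,x}\der W_s,\qquad 0\le t\le T<\infty ,
\]
the term $\int g\der K$ being absent since $g\equiv0$. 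The strict monotonicity furnished by $-\alpha Y$ together with the Lipschitz character of $f$ in $z$ gives a unique solution with $Y^{\alpha,x}$ bounded, and the Markov property of $X^x$ provides measurable functions $v^\alpha,\xi^\alpha$ with $Y_t^{\alpha,x}=v^\alpha(X_t^x)$, $Z_t^{\alpha,x}=\xi^\alpha(X_t^x)$, $v^\alpha(x)=Y_0^{\alpha,x}$. \textbf{Step 2 (bound on $\alpha v^\alpha$).} Because $\G$ is compact and $f(\cdot,0)$ is continuous, $M:=\sup_{\G}|f(\cdot,0)|<\infty$, and a comparison argument yields $\sup_x|\alpha v^\alpha(x)|\le M$.

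\textbf{Step 3 (oscillation bound, uniform in $\alpha$).} This is the decisive use of the coupling estimate. Using the Lipschitz dependence of $f$ in $z$ and the Markovian form $Z_s^{\alpha,x}=\xi^\alpha(X_s^x)$, I would write $f(w,\xi^\alpha(w))-f(w,0)=\xi^\alpha(w)\,\Upsilon^\alpha(w)$ with $|\Upsilon^\alpha|\le C$ as in Lemma~\ref{approximation upsilon}; the point is that $\Upsilon^\alpha$ is a function of the current state only, \emph{not} of the starting point. A Girsanov change of measure removing the $z$--nonlinearity then gives
\[
v^\alpha(x)=\int_0^\infty e^{-\alpha s}\,\widehat{\mathscr{P}}_s\bigl[f(\cdot,0)\bigr](x)\,\der s,
\]
where $\widehat{\mathscr{P}}$ is the Kolmogorov semigroup of the penalized forward process with the added bounded drift $\sigma\Upsilon^\alpha$, the \emph{same} semigroup for every starting point. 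Since this drift is only bounded measurable I would approximate $\Upsilon^\alpha$ by the Lipschitz functions $\Upsilon_{m,n}$ of Lemma~\ref{approximation upsilon} and invoke Lemma~\ref{basic coupling estimates} in the extended form of Remark~\ref{basic coupling estimate etendue double suite}, whose constants $\hat c,\hat\eta$ depend on the drift only through its supremum and are therefore uniform in $\alpha$ and in the penalization index. This produces
\[
|v^\alpha(x)-v^\alpha(y)|\le \frac{\hat c\,(1+|x|^2+|y|^2)\,M}{\alpha+\hat\eta},
\]
which, $x,y$ ranging in the bounded set $\G$, is bounded independently of $\alpha$; subtracting $v^\alpha(0)$ gives $\sup_x|v^\alpha(x)-v^\alpha(0)|\le C$, i.e. the announced bound $|v(x)-v(y)|\le C$ in the limit.

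\textbf{Step 4 (equicontinuity) and Step 5 (vanishing discount).} To pass to the limit I need a modulus of continuity for $\{v^\alpha-v^\alpha(0)\}$ uniform in $\alpha$. Here I would exploit the invertibility of $\sigma$: after regularizing $f(\cdot,0)$ and $\Upsilon^\alpha$, a Bismut--Elworthy--Li type gradient formula applied to the common semigroup $\widehat{\mathscr{P}}$ bounds $|\nabla v^\alpha|$ by $\int_0^\infty e^{-\alpha s}\bigl|\nabla\widehat{\mathscr{P}}_s[f(\cdot,0)]\bigr|\der s$, the integrand being controlled by $\|f(\cdot,0)\|_0$ alone, not by any Lipschitz constant of $f$ in $x$. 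This gives $|v^\alpha(x)-v^\alpha(y)|\le C|x-y|$ uniformly in $\alpha$. Writing $\bar v^\alpha:=v^\alpha-v^\alpha(0)$, the estimates of Steps~2--4 allow me to extract, by Arzel\`a--Ascoli and a diagonal argument, a sequence $\alpha_k\downarrow0$ with $\alpha_k v^{\alpha_k}(0)\to\lambda$ and $\bar v^{\alpha_k}\to v$ locally uniformly, where $v(0)=0$ and $v$ is bounded and Lipschitz with the same constants. Passing to the limit in the discounted BSDE, via stability of BSDEs and the convergence of $\xi^{\alpha_k}$ to some measurable $\xi$, yields $(Y^x,Z^x,\lambda)=(v(X^x),\xi(X^x),\lambda)$ solving \eqref{EBSDE} together with all the claimed estimates.

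\textbf{Main obstacle.} The delicate step is Step~4: securing the Lipschitz bound \emph{uniformly in $\alpha$} while $f$ is merely continuous in $x$. The stated coupling estimate only controls oscillations with a factor $(1+|x|^2+|y|^2)$, so it alone does not deliver a $|x-y|$--rate; the remedy is precisely the regularization announced in the introduction, combined with the fact that both the coupling estimate and the gradient formula depend on the data through supremum norms only, so that the gradient bound survives the removal of the regularization. Making this rigorous in the reflected/penalized setting — commuting the limits in the penalization index, the regularization parameter and $\alpha$, and controlling the reflection in the Bismut--Elworthy argument — is the technical heart of the proof.
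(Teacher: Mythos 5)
Your plan is strategically sound, but it re-derives from scratch a result that the paper obtains by reduction and citation: the paper's entire proof consists of invoking Remark \ref{remarque transformation b en dissipatif} to replace $b$ by its weakly dissipative extension $\widetilde{b}(x)=-x+(b(\Pi(x))+\Pi(x))$ and $f$ by $f_{\R^d}=f\circ\Pi$, and then appealing to Theorem 4.4 of \cite{MADEC_ERGODIC}, which directly yields $Y_t^x=v(X_t^x)$, $Z_t^x=\xi(X_t^x)$, $v(0)=0$ together with the weighted bounds $|v(x)-v(y)|\le C(1+|x|^2+|y|^2)$ and $|v(x)-v(y)|\le C(1+|x|^2+|y|^2)|x-y|$; the constant bounds in the statement then follow immediately from the boundedness of $\G$. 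Your vanishing-discount scheme (discounted BSDE, $|\alpha v^\alpha|\le\sup_{\G}|f(\cdot,0)|$, linearization by the state-dependent drift $\Upsilon^\alpha$ plus Girsanov, coupling estimate made uniform in $\alpha$ via Lemma \ref{approximation upsilon} and Remark \ref{basic coupling estimate etendue double suite}, gradient bound, Arzel\`a--Ascoli) is precisely the machinery inside that cited theorem, and your ``main obstacle'' correctly identifies where its real work lies. Two points should be made explicit for your argument to close. First, the coupling estimate of Lemma \ref{basic coupling estimates} and the Bismut--Elworthy formula concern the unreflected equation on $\R^d$, so the extension of the coefficients ($\widetilde{b}$, $f_{\R^d}$) must be performed before the penalized process and these tools are meaningful; this reduction is exactly the content of the paper's proof, and your write-up only implicitly assumes it. Second, in your Step 4 the short-time Bismut--Elworthy estimate alone gives an integrand of order $s^{-1/2}\|f(\cdot,0)\|_0$ with no decay at infinity, so $\int_0^\infty e^{-\alpha s}|\nabla\widehat{\mathscr{P}}_s[f(\cdot,0)]|\der s$ is \emph{not} bounded uniformly in $\alpha$ from that estimate by itself; one must splice the gradient formula on $[s-1,s]$ with the exponential decay furnished by the coupling estimate for $s\ge 1$ to obtain a gradient bound that is integrable uniformly in $\alpha$. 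With these two points spelled out, your proposal is a correct, self-contained proof; the paper's route is simply the economical one of outsourcing it.
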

\begin{proof}
First let us recall that by Remark \ref{remarque transformation b en dissipatif}, one can replace $b$ by its extension $\widetilde{b}$ which is weakly dissipative. Therefore, replacing $f$ by $f_{\R^d}$, we obtain, by Theorem $4.4$ in \cite{MADEC_ERGODIC} that there exists $v : \G \rightarrow \R$ and $\xi : \G \rightarrow \R^{1 \times d}$ measurable  such that for every $x,y \in \G$, for all $t \geq 0$,
 \begin{align*}
&Y_t^{x} = v(X_t^x), Z_t^x = \xi(X_t^x),\\
&v(0) = 0,\\
&|v(x) - v(y)| \leq C(1+|x|^{2} + |y|^{2}),\\
&|v(x) - v(y)| \leq C(1+|x|^{2} + |y|^{2})|x-y|.
\end{align*}
And the result follows by the boundedness of $\G$.
\end{proof}

\begin{lemm}[Existence]\label{lemm_existence_EBSDE}
Assume that the Hypotheses \ref{hypo SDE reflected}, \ref{hypo_G} and \ref{EBSDE} hold true. Then there exists a solution $(Y^{x},Z^x,\lambda) \in L_{\mathscr{P},\text{loc}}^2(\Omega, \mathscr{C}([0,+\infty[;\R)) \times L_{\mathscr{P},\text{loc}}^2(\Omega, {L}^2([0,+\infty[;\R^{1\times d})) \times \R$ to the EBSDE (\ref{EBSDE}). Moreover there exists $v : \G \rightarrow \R$ such that for every $x,y \in \G$, for all $t \geq 0$,
\begin{align*}
&Y_t^{x} = v(X_t^x),\\
&|v(x)| \leq C,\\
&|v(x) - v(y)| \leq C|x-y|.
\end{align*}
\end{lemm}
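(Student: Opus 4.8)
The plan is to reduce the general case to the already-solved case $g\equiv 0$ of Lemma \ref{lemm_existence_EBSDE_Neumann_null} by subtracting off a smooth function that absorbs the boundary (reflection) term. Concretely, I would reuse the auxiliary function $v^1$ constructed in the proof of Lemma \ref{lemme exitence sol de viscosit� parabolique}: there exists $v^1 \in \mathscr{C}^2_{\text{lip}}(\G)$ solving the Helmholtz problem $\Delta v^1 - \alpha v^1 = 0$ in $G$ with $\frac{\partial v^1}{\partial n} + g = 0$ on $\partial G$. The whole point of $v^1$ is that, since the reflection direction is $\nabla\phi$ and $\der K^x$ is carried by $\{X^x\in\partial G\}$, the It\^o correction produced by $v^1$ along the boundary exactly matches the Neumann data, i.e. $\nabla v^1(X_s^x)\cdot\nabla\phi(X_s^x)\,\der K_s^x = \frac{\partial v^1}{\partial n}(X_s^x)\,\der K_s^x = -g(X_s^x)\,\der K_s^x$.

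Next I would apply It\^o's formula to $v^1(X_s^x)$ between $t$ and $T$ and insert it into the EBSDE \eqref{EBSDE}. Setting
\begin{align*}
\widetilde{Y}_s^x = Y_s^x - v^1(X_s^x), \qquad \widetilde{Z}_s^x = Z_s^x - \nabla v^1(X_s^x)\sigma,
\end{align*}
the two $\int g(X_s^x)\,\der K_s^x$ contributions cancel, exactly as in the parabolic reduction, and $(\widetilde{Y}^x,\widetilde{Z}^x,\lambda)$ solves the EBSDE with null Neumann term
\begin{align*}
\widetilde{Y}_t^x = \widetilde{Y}_T^x + \int_t^T \bigl(\widetilde{f}(X_s^x,\widetilde{Z}_s^x)-\lambda\bigr)\,\der s - \int_t^T \widetilde{Z}_s^x\,\der W_s,
\end{align*}
with driver $\widetilde{f}(x,z) = f\bigl(x, z + \nabla v^1(x)\sigma\bigr) + \LL v^1(x)$. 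I would then check that $\widetilde{f}$ satisfies Hypothesis \ref{hypo EBSDE}: it is Lipschitz in $z$ with the same constant as $f$ (the shift by $\nabla v^1(x)\sigma$ is harmless), and it is continuous in $x$ for fixed $z$, because $f(\cdot,z)$ is continuous, $\nabla v^1$ is continuous, and $\LL v^1$ is continuous thanks to $v^1\in\mathscr{C}^2_{\text{lip}}(\G)$ and $b$ Lipschitz; a standard two-term splitting handles the joint dependence coming from the argument $z+\nabla v^1(x)\sigma$.

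Having verified the hypotheses, I would invoke Lemma \ref{lemm_existence_EBSDE_Neumann_null} for the driver $\widetilde{f}$ and $g\equiv 0$, obtaining $(\widetilde{Y}^x,\widetilde{Z}^x,\lambda)$ together with a measurable $\widetilde{v}:\G\to\R$ with $\widetilde{Y}_t^x = \widetilde{v}(X_t^x)$, $\widetilde{v}(0)=0$, the oscillation bound $|\widetilde{v}(x)-\widetilde{v}(y)|\le C$, and the Lipschitz bound $|\widetilde{v}(x)-\widetilde{v}(y)|\le C|x-y|$. Finally I would set $v := \widetilde{v} + v^1$, so that $Y_t^x = v(X_t^x)$ and $Z_t^x = \widetilde{Z}_t^x + \nabla v^1(X_t^x)\sigma$ solve \eqref{EBSDE} with the same $\lambda$. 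The claimed properties follow immediately: $|v(x)|\le |\widetilde{v}(x)-\widetilde{v}(0)| + |v^1(x)| \le C + \|v^1\|_0 \le C$ using the oscillation bound on $\widetilde v$ and boundedness of $v^1$ on the compact set $\G$, and $v$ is Lipschitz as a sum of two Lipschitz functions on $\G$. The only genuinely nontrivial ingredient is the Helmholtz lift $v^1$ and the verification that its reflection term cancels $\int g\,\der K$ on the boundary; once this is in place the argument is a clean reduction, and the regularity/boundedness of $\widetilde{f}$ is the routine point to be careful about.
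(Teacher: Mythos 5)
Your proposal is correct and follows essentially the same route as the paper: the paper also takes the Helmholtz lift $v^1$, observes that $Y^1_t=v^1(X_t^x)$, $Z^1_t=\nabla v^1(X_t^x)\sigma$ absorbs the $\int g\,\der K$ term, and then applies Lemma \ref{lemm_existence_EBSDE_Neumann_null} to the null-Neumann EBSDE with the shifted driver $f^2(x,z)=\LL v^1(x)+f(x,z+\nabla v^1(x)\sigma)$, recombining the two pieces at the end. Your subtract-then-solve presentation is just the reverse framing of the paper's solve-then-add construction, with the same driver and the same verification steps.
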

\begin{proof}
First as in the proof of Theorem 3.1 in \cite{Richou_Ergodic_BSDE_PDe_Neumann}, we deduce the existence of a function $v^1 : \G \rightarrow \R$ which belongs to the space $\mathscr{C}^2_{\text{lip}}(\G)$ and is solution of Helmholtz's equation for some $\alpha \in \R$,
\begin{align*}
\left\{
\begin{array}{ll}
\Delta v^1(x) - \alpha v^1(x) =  0,\\
\frac{\partial v^1(x)}{\partial n} + g(x) = 0.
\end{array}
\right.
\end{align*}
Then, if we define $(Y^1_t := v^{1}(X_t^x),Z_t^1 := \nabla v^1(X_t^x) \sigma)$, $(Y^1,Z^1)$ satisfies, for every $0 \leq t \leq T < + \infty$:
\begin{align}\label{EBSDE 1}
Y^1_t = Y_T^1 + \int_t^T \left[-\LL v^1(X_s^x) \right]\der s + \int_t^T g(X_s^x) \der K_s^x - \int_t^T Z^1_s \der W_s, 
\end{align}
where
\begin{align*}
(\LL v^1)(x) = \frac{1}{2}\tr (\sigma \transpose \sigma \nabla^2 v^1) + \langle \widetilde{b}(x), \nabla v^1 \rangle.
\end{align*}
Now consider the following EBSDE:
\begin{align}\label{EBSDE 2}
Y^2_t = Y_T^2 + \int_t^T [f^2(X_s^x,Z_s^2) -\lambda ] \der s - \int_t^T Z_s^2 \der W_s, ~~~~\forall 0 \leq t \leq T < + \infty,
\end{align}
with $f^2(x,z) := \LL v^1(x) + f(x, z + \nabla v^1(x) \sigma)$. Since $\forall z \in \R^{1 \times d}$, $f^2(\cdot,z)$ is continuous and since for every $x \in \G$, $f^2(x,\cdot)$ is Lipschitz, one can apply Lemma \ref{lemm_existence_EBSDE_Neumann_null} to obtain the existence of a solution $(Y^2_t = v^2(X_t^x), Z^2_t = \xi^2(X_t^x) )$ to EBSDE (\ref{EBSDE 2}) such that $v^2$ is continuous. We set
\begin{align*}
&Y_t^x = Y^1_t + Y_t^2 = v^1(X_t^x) + v^2(X_t^x),\\
&Z_t^x = Z_t^1 + Z_t^2 = \nabla v^1(X_t^x) \sigma + \xi^2(X_t^x).
\end{align*}
Then $(Y^x,Z^x,\lambda)$ is a solution of the EBSDE (\ref{EBSDE}).
\end{proof}

\begin{théo}[Uniqueness of $\lambda$]\label{theorem uniqueness for lambda EBSDE}Assume that the Hypotheses \ref{hypo SDE reflected}, \ref{hypo_G} and \ref{EBSDE} hold true. If $(Y^1,Z^1,\lambda^1)$ and $(Y^2,Z^2,\lambda^2)$ denote two solutions of the EBSDE \eqref{EBSDE} in the class of solutions $(Y,Z,\lambda)$ such that $\forall t \geq 0, |Y_t| \leq C$, $\Pb$-a.s. and $Z \in L^2_{\mathscr{P},loc}(\Omega, L^2([0,\infty[;\R^{1 \times d})$, then
\begin{align*}
\lambda^1 = \lambda^2.
\end{align*}
\end{théo}
\begin{proof}
See Theorem $4.6$ in \cite{FUHRMAN_HU_TESSITORE_ERGODIC_BSDE}.
\end{proof}

Let us now state our main result of this section.
\begin{théo}[Uniqueness of solutions $(Y,Z,\lambda)$]\label{Uniqueness EBSDE}Assume that the Hypotheses \ref{hypo SDE reflected}, \ref{hypo_G} and \ref{EBSDE} hold true. Uniqueness holds for solutions $(Y,Z,\lambda)$ of the EBSDE \eqref{EBSDE} in the class of solutions such that there exists $v : \G \rightarrow \R$ continuous, $Y_s = v(X_s^x)$ with $v(0) = 0$, and $Z \in L^2_{\mathscr{P},loc}(\Omega, L^2([0,\infty[;\R^{1 \times d})$.
\end{théo}
\begin{proof}
Let $(Y^1 = v^1(X^x), Z^1 , \lambda^1)$ and $(Y^2 = v^2(X^x), Z^2 , \lambda^2)$ denote two solutions. Then from Theorem \ref{theorem uniqueness for lambda EBSDE}, we deduce that $\lambda^1 = \lambda^2 =: \lambda$.

Now, let us denote by $v : \G \rightarrow \R$, $ v \in \mathscr{C}^2_{\text{lip}}(\G)$ and solution of Helmholtz's equation for some $\alpha \in \R$
\begin{align*}
\left\{
\begin{array}{ll}
\Delta v(x) - \alpha v(x) =  0,\\
\frac{\partial v(x)}{\partial n} + g(x) = 0.
\end{array}
\right.
\end{align*}
Then, if we define $(Y_t := v(X_t^x),Z_t := \nabla v(X_t^x) \sigma)$, $(Y,Z)$ satisfies, for every $0 \leq t \leq T < + \infty$:
\begin{align}\label{EBSDE Helmholtz 1}
Y_t = Y_T + \int_t^T \left[-\LL v(X_s^x) \right]\der s + \int_t^T g(X_s^x) \der K_s^x - \int_t^T Z_s \der W_s, 
\end{align}
where
\begin{align*}
(\LL v)(x) = \frac{1}{2}\tr (\sigma \transpose \sigma \nabla^2 v) + \langle \widetilde{b}(x), \nabla v \rangle.
\end{align*}
Therefore, $(\widehat{Y}_t^1 = Y_t^1 - v(X_t^x) , \widehat{Z}^1_t = Z_t^1 - \transpose \nabla v(X_t^x) \sigma)$ satisfies the BSDE, $\forall 0 \leq t \leq T < + \infty$,
\begin{align*}
\widehat{Y}_t^1 = \widehat{Y}_T^1 + \int_t^T \widehat{f}(X_s^x,\widehat{Z}_s^x)  \der s - \int_t^T \widehat{Z}_s^1 \der W_s,
\end{align*}
where $\forall x,z \in \R^d \times \R^{1 \times d}$,
\begin{align*}
\widehat{f}(x,z) = f\left(x,z + \transpose \nabla v^1(x) \sigma\right) - \lambda + \LL v(x).
\end{align*}
Then, let $(\widehat{Y}^{1,T,t,x}, \widehat{Z}^{1,T,t,x})$ be the solution of the following BSDE, $\forall s \in [t,T]$,
\begin{align*}
\widehat{Y}_s^{1,T,t,x} = (v^1 - v)(X_T^{t,x}) + \int_t^T \widehat{f}(X_s^x,\widehat{Z}_s^{1,T,t,x}) \der s - \int_t^T \widehat{Z}_s^{1,T,t,x} \der W_s.
\end{align*}
By uniqueness of solutions to BSDE, we deduce that
\begin{align*}
v^{1}(x) - v(x) = \widehat{Y}_0^{1,T,0,x}.
\end{align*}
Now, we fix infinitely differentiable functions $\rho_{\varepsilon} : \R^d \rightarrow \R^+$ bounded together with their derivatives of all order, such that: $\int_{\R^d}\rho_{\varepsilon}(x) \der x = 1$ and
\begin{align*}
\text{supp}(\rho_{\varepsilon}) \subset \left\{ \xi \in \R^d: |\xi| \leq \varepsilon \right\},
\end{align*}
where $\text{supp}$ denotes the support. Then we define $\forall n \in \N$,
\begin{align*}
&(F_n)_{\varepsilon} (x) = \int_{\R^d} \rho_{\varepsilon}(y) F_n(x-y) \der y,\\
&\widetilde{b}_{\varepsilon}(x) = \int_{\R^d} \rho_{\varepsilon}(y) \widetilde{b}(x-y) \der y.
\end{align*}
Let us denote by $X^{t,x,n,\varepsilon}$ the solution of the following SDE, $\forall s \geq t$,
\begin{align*}
X_s^{t,x,n,\varepsilon} = x + \int_t^s \left( \widetilde{b}_\varepsilon + (F_n)_\varepsilon  \right)(X_r^{t,x,n,\varepsilon}) \der r + \int_t^s \sigma \der W_r,
\end{align*}
and let $(Y^{1,T,t,x,n,\varepsilon}, Z^{1,T,t,x,n,\varepsilon})$ be the solution of the following BSDE, $\forall s \in [t,T]$,
\begin{align*}
Y^{1,T,t,x,n,\varepsilon}_s = (v^1-v)(X_s^{t,x,n,\varepsilon}) + \int_t^s \widehat{f}(X_r^{t,x,n,\varepsilon},Z_r^{1,T,t,x,n,\varepsilon}) \der r - \int_s^T Z_r^{1,T,t,x,n,\varepsilon} \der W_s.
\end{align*}
Then by a stability result, (see for e.g. Lemma 2.3 of \cite{BRIAND_HU_STABILITY_BSDE}), we deduce that
\begin{align}\label{convergence de Y1 pour l unicite EDSRE}
\lim_{n\rightarrow + \infty}\lim_{\varepsilon \rightarrow + \infty} Y^{1,T,0,x,n,\varepsilon}_0 = \widehat{Y}_0^{1,T,0,x} = v^{1}(x) - v(x). 
\end{align}

Similarly, defining $(Y^{2,T,t,x}, Z^{2,T,t,x})$ and $(Y^{2,T,t,x,n,\varepsilon}, Z^{2,T,t,x,n,\varepsilon})$ in the same way, we deduce that
\begin{align*}
\lim_{n\rightarrow + \infty}\lim_{\varepsilon \rightarrow + \infty} Y^{2,T,0,x,n,\varepsilon}_0 = \widehat{Y}_0^{2,T,0,x} = v^{2}(x) - v(x). 
\end{align*}

Furthermore, by Theorem 4.2 (or Theorem 4.2 in \cite{FUHRMAN_TESSITORE_BISMUT_ELWORTHY}), if we define $u^{1,T, n,\varepsilon}(t,x) := Y^{1,T,t,x,n,\varepsilon}_t$, then $(x \mapsto u^{1,T, n,\varepsilon}(t,x))$ is continuously differentiable for all $t \in [0,T[$, and  $\forall s \in [t,T[$,
\begin{align*}
Z_s^{1,T,t,x,n,\varepsilon} = \transpose \nabla u^{1,T, n,\varepsilon}(s,X_s^{t, x , n,\varepsilon }) \sigma .
\end{align*} 
Similarly, we define $u^{2,T, n,\varepsilon}(t,x) := Y^{2,T,t,x,n,\varepsilon}_t$ and then
\begin{align*}
Z_s^{2,T,t,x,n,\varepsilon} = \transpose \nabla u^{2,T, n,\varepsilon}(s,X_s^{t, x , n,\varepsilon }) \sigma .
\end{align*}
Therefore, taking $t= 0$, $\forall T > 0$,
\begin{align*}
 u^{1,T, n,\varepsilon}(0,x) -  u^{2,T, n,\varepsilon}(0,x) &= (v^1-v^2)(X_T^{x,n,\varepsilon}) - \int_0^T( Z_s^{1,T,x,n,\varepsilon} - Z_s^{2,T,x,n,\varepsilon})  \der W_s \\
 &~~~~+ \int_0^T \left[\widehat{f}(X_s^{x,n,\varepsilon},Z_s^{1,T,x,n,\varepsilon}) - \widehat{f}(X_s^{x,n,\varepsilon},Z_s^{2,T,x,n,\varepsilon})\right] \der s\\
 &=(v^1-v^2)(X_T^{x,n,\varepsilon}) \\
 &~~~~ - \int_0^T (Z_s^{1,T,x,n,\varepsilon} - Z_s^{2,T,x,n,\varepsilon}) (-\beta(s,X_s^{x,n,\varepsilon}) \der s + \der W_s),
\end{align*}
where 
\begin{align*}
\beta^T(s,x) = 
\left\{
\begin{array}{lll}
&\!\!\!\!\!\!\!\!\!\!\!\!\!\frac{(f(x,\transpose\nabla u^{1,T, n,\varepsilon}(s,x)\sigma)  - f(x,\transpose\nabla u^{2,T, n,\varepsilon}(s,x)\sigma)(\transpose\nabla u^{1,T, n,\varepsilon}(s,x)\sigma - \transpose\nabla u^{2,T, n,\varepsilon}(s,x)\sigma) }{|\transpose\nabla u^{1,T, n,\varepsilon}(s,x)\sigma - \transpose\nabla u^{2,T, n,\varepsilon}(s,x)\sigma|^2}\mathds{1}_{s < T},\\
 & \text{if } \nabla u^{1,T, n,\varepsilon}(s,x) \neq \nabla u^{2,T, n,\varepsilon}(s,x),\\
0, & \text{otherwise}.
\end{array}
\right.
\end{align*}
The process $(\beta^T(s,X_s^{x,n,\varepsilon}))_{s \in [0,T]}$ is progressively measurable and bounded, therefore, we can apply Girsanov's Theorem to obtain that there exists a new probability measure $\Q^T$ equivalent to $\Pb$ under which $(W_t-\int_0^t\beta(s,X_s^{x,n,\varepsilon}) \der s)_{t \in [0,T]}$ is a Brownian motion. Therefore, denoting by $E^{\Q^T}$ the expectation with respect to the probability $\Q^T$,
\begin{align*}
u^{1,T, n,\varepsilon}(0,x) -  u^{2,T, n,\varepsilon}(0,x) &= \E^{\Q^T}\left[(v^1-v^2)(X_T^{x,n,\varepsilon}) \right]\\
& = \mathscr{P}_T[v^1-v^2](x),
\end{align*}
where $\mathscr{P}_t$ is the Kolmogorov semigroup associated to the following SDE, $\forall t \geq 0$,
\begin{align*}
 U_t^x = x + \int_0^t\left( \widetilde{b}_\varepsilon + (F_n)_\varepsilon \right)(U_s^{x}) \der s + \int_0^t \sigma \beta(s,U_s^x) \der s + \int_0^s \sigma \der W_s.
\end{align*}
By Corollary \ref{basic coupling estimates étendu} and Remark \ref{approximation upsilon}, we deduce that
\begin{align*}
|u^{1,T, n,\varepsilon}(0,x) -  u^{2,T, n,\varepsilon}(0,x) - (u^{1,T, n,\varepsilon}(0,0) -(u^{2,T, n,\varepsilon}(0,0))| \leq Ce^{-\hat{\eta}T}.
\end{align*}
Therefore, thanks to \eqref{convergence de Y1 pour l unicite EDSRE},
\begin{align*}
|v^1(x) - v^2(x) -(v^1(0) - v^2(0))| \leq Ce^{- \hat{\eta} T}.
\end{align*}
Therefore, since $v^1(0) = v^2(0) = 0$, letting $T \rightarrow + \infty$ we deduce that
\begin{align*}
v^1(x) = v^2(x), \forall x \in \G.
\end{align*}
\end{proof}

We recall the link of such EBSDE with ergodic PDE. Let us consider the following ergodic semilinear PDE for which the unknown is a pair $(v,\lambda)$:
\begin{align}\label{PDE_ergodic}
\left\{
\begin{array}{ll}
\LL v(x) + f(x, \nabla v(x) \sigma) - \lambda = 0, &\forall x \in G,\\
\frac{\partial v(x)}{\partial n} +g(x) =0, &\forall x \in \partial G.
\end{array}
\right.
\end{align}

\begin{lemm}[Existence of ergodic viscosity solutions]
Assume that the Hypotheses \ref{hypo SDE reflected}, \ref{hypo_G} and \ref{hypo EBSDE} hold true then the solution $(v , \lambda)$ of Lemma \ref{lemm_existence_EBSDE} is a viscosity solution of $(\ref{PDE_ergodic})$.
\end{lemm}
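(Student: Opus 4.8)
The plan is to deduce the ergodic (elliptic) viscosity property from the finite-horizon parabolic result already available for \eqref{PDE finite horizon}, using the time-homogeneity of the coefficients to kill the time derivative. Recall from Lemma \ref{lemm_existence_EBSDE} that $v$ is continuous and bounded on $\G$, that $Y_t^x = v(X_t^x)$, and hence that for every $T>0$,
\[
v(x) = v(X_T^x) + \int_0^T (f(X_s^x, Z_s^x) - \lambda)\,\der s + \int_0^T g(X_s^x)\,\der K_s^x - \int_0^T Z_s^x\,\der W_s.
\]
Fixing $T>0$, I would introduce the Markovian BSDE on $[t,T]$ with generator $\bar f(x,z):=f(x,z)-\lambda$, Neumann term $g$, and terminal condition $v(X_T^{t,x})$. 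Since $v$ is continuous and $\bar f$ still satisfies Hypothesis \ref{hypo BSDE markovien}, the map $u_T(t,x):=\bar Y_t^{T,t,x}$ is, by the existence result for \eqref{PDE finite horizon} (\cite{PARDOUX_ZHANG_GENERALIZED}, Theorem 4.3), a continuous viscosity solution of
\[
\begin{cases}
\partial_t u + \LL u + f(x,\nabla u\,\sigma) - \lambda = 0, & (t,x)\in[0,T]\times G,\\
\partial_n u + g = 0, & (t,x)\in[0,T]\times\partial G,\\
u(T,\cdot)=v.
\end{cases}
\]

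Next I would identify $u_T$ with $v$. As $b,\sigma,f,g$ are time-independent, the reflected flow is time-homogeneous, so uniqueness of BSDE solutions gives $\bar Y_t^{T,t,x}=\bar Y_0^{T-t,0,x}$; and the displayed EBSDE identity, read with horizon $T-t$, is exactly the finite-horizon BSDE with terminal datum $v(X_{T-t}^x)$, so by uniqueness $\bar Y_0^{T-t,0,x}=v(x)$. Thus $u_T(t,x)=v(x)$ on $[0,T]\times\G$; in particular $u_T$ is independent of $t$.

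It then remains to transfer the viscosity inequalities. The hard part is really just this identification $u_T\equiv v$, after which the transfer is routine: given $\psi\in\mathscr{C}^2(\R^d)$ with $v-\psi$ having a local maximum at $x_0\in\G$, I would set $\varphi(t,x):=\psi(x)$, so that $u_T-\varphi$ has a local maximum at $(t_0,x_0)$ for any interior $t_0\in(0,T)$. If $x_0\in G$, the parabolic subsolution inequality reads $\partial_t\varphi(t_0,x_0)+\LL\psi(x_0)+f(x_0,\nabla\psi(x_0)\sigma)-\lambda\ge 0$, and $\partial_t\varphi=0$ turns this into the ergodic subsolution inequality; if $x_0\in\partial G$, the parabolic definition yields the disjunction that either this interior inequality holds or $\partial_n\psi(x_0)+g(x_0)\ge 0$, which is precisely the viscosity Neumann condition for \eqref{PDE_ergodic}. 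The supersolution case is symmetric, testing against a local minimum. The only point requiring a little care is to select the test time $t_0$ strictly inside $(0,T)$ so that the parabolic test is licit away from the terminal slice; apart from this, the argument is a direct specialization of the parabolic viscosity property to time-independent test functions, and it yields that $v$ is a viscosity solution of \eqref{PDE_ergodic}.
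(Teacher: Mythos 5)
Your proof is correct, but it follows a genuinely different route from the paper's. The paper settles this lemma in one line: it notes that $v$ is continuous (by Lemma \ref{lemm_existence_EBSDE}) and then directly adapts the classical argument of \cite{PARDOUX_ZHANG_GENERALIZED} (the nonlinear Feynman--Kac verification for generalized BSDEs) to the ergodic BSDE, checking the viscosity inequalities for $v$ straight from the representation $Y_t^x=v(X_t^x)$. You instead reduce the ergodic statement to the paper's already-quoted existence result for viscosity solutions of the parabolic problem \eqref{PDE finite horizon}: you run the finite-horizon BSDE with generator $f-\lambda$, Neumann term $g$ and terminal datum $v$, identify its value function $u_T$ with $v$ by uniqueness of finite-horizon BSDE solutions (the EBSDE \eqref{EBSDE} restricted to a window of length $T-t$ is exactly that BSDE) combined with time-homogeneity, and then pass from the parabolic to the elliptic viscosity inequalities by testing with time-independent functions, for which $\partial_t\varphi=0$ makes both the interior inequality and the disjunctive Neumann condition reduce verbatim, whatever sign convention is used. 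This buys a proof that is self-contained relative to the paper's own lemmas, and it exhibits $v$ as a time-invariant (fixed-point) solution of the parabolic problem; what it costs is the shift identity $\bar Y_t^{T,t,x}=\bar Y_0^{T-t,0,x}$, which you assert from time-homogeneity. That identity is standard for Markovian BSDEs with time-independent coefficients (and is implicitly used throughout the paper whenever $Y_t^{T,t,x}$ is treated as a deterministic function of $(t,x)$), but you could bypass it entirely by invoking the paper's convention from Section 4.2 that the EBSDE along the flow started at time $t$ has solution $Y_s^{t,x}=v(X_s^{t,x})$: restricting that equation to $[t,T]$ and applying BSDE uniqueness gives $u_T(t,x)=v(x)$ directly, with no shift argument.
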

\begin{proof}
Note that $v$ is continuous by Lemma \ref{lemm_existence_EBSDE}.  The proof of this result is very classical and can be easily adapted from \cite{PARDOUX_ZHANG_GENERALIZED}.
\end{proof}

\begin{lemm}[Uniqueness of ergodic viscosity solutions]\label{Uniqueness of ergodic viscosity solutions}
Assume that the Hypotheses \ref{hypo SDE reflected}, \ref{hypo_G}, \ref{hypo unicite parabolic PDE} and \ref{EBSDE} hold true. Then uniqueness holds for viscosity solutions $(v,\lambda)$ of \eqref{PDE_ergodic} in the class of (continuous)  viscosity solutions such that $\exists a \in \R^d$, $v^1(a)=v^2(a)$. 
\end{lemm}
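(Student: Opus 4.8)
The plan is to transfer the problem from the stationary (ergodic) level, where the solutions are only continuous, to the parabolic level, where a genuine BSDE representation with a well-defined $Z$-component is available, and then to reproduce verbatim the coupling argument used in Theorem~\ref{Uniqueness EBSDE}. Let $(v^1,\lambda^1)$ and $(v^2,\lambda^2)$ be two continuous viscosity solutions of \eqref{PDE_ergodic} with $v^1(a)=v^2(a)$. For a fixed horizon $T>0$ I would set $w^i(t,x):=v^i(x)+\lambda^i(T-t)$ and check that, since $\LL$ and $f(x,\cdot\,\sigma)$ involve only the spatial derivatives, $w^i$ is a continuous viscosity solution of the parabolic problem \eqref{PDE finite horizon} with terminal datum $h=v^i$ (adding the smooth, space-independent term $\lambda^i(T-t)$ leaves every viscosity test inequality unchanged). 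By uniqueness of continuous viscosity solutions of \eqref{PDE finite horizon}, valid under the standing hypotheses and in particular Hypothesis~\ref{hypo unicite parabolic PDE}, together with the probabilistic (nonlinear Feynman--Kac) representation of that solution, $w^i$ must coincide with the value of the associated BSDE; taking $t=0$ yields
\begin{align*}
v^i(x)+\lambda^i T = Y_0^{T,x;i},
\end{align*}
where $(Y^{T,x;i},Z^{T,x;i})$ solves \eqref{BSDE} with terminal condition $\xi^T=v^i(X_T^x)$. This is the step that bypasses the lack of regularity of $v^i$: I never need to interpret ``$Z=\nabla v^i\,\sigma$'', only the genuinely square-integrable $Z^{T,x;i}$ of the finite-horizon problem.

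Next I would run the machinery of the proof of Theorem~\ref{Uniqueness EBSDE}. Letting $\psi\in\mathscr{C}^2_{\mathrm{lip}}(\G)$ solve the Helmholtz problem $\Delta\psi-\alpha\psi=0$, $\partial\psi/\partial n+g=0$, It\^o's formula shows that $\widehat Y^{i}:=Y^{T,x;i}-\psi(X^x)$ solves a BSDE with terminal $(v^i-\psi)(X_T^x)$, Lipschitz generator $\widehat f(x,z)=f(x,z+\nabla\psi(x)\sigma)+\LL\psi(x)$ and no $\der K$ term. I then penalize the reflected forward SDE by $X^{x,n}$, mollify the coefficients to $X^{x,n,\varepsilon}$, and let $(\widehat Y^{i,T,x,n,\varepsilon},\widehat Z^{i,T,x,n,\varepsilon})$ be the corresponding BSDEs; a stability estimate for BSDEs gives $\lim_n\lim_\varepsilon \widehat Y^{i,T,x,n,\varepsilon}_0 = v^i(x)-\psi(x)+\lambda^i T$. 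Since the mollified forward process is smooth and non-degenerate, the Bismut--Elworthy formula used in the proof of Theorem~\ref{Uniqueness EBSDE} identifies $\widehat Z^{i,T,x,n,\varepsilon}_s=\transpose\nabla \widehat u^{i}(s,X_s^{x,n,\varepsilon})\sigma$, with $\widehat u^i(s,x):=\widehat Y_s^{i,T,s,x,n,\varepsilon}$. Writing the equation for the difference $\widehat u^1-\widehat u^2$, the generator difference linearizes through a bounded progressively measurable factor $\beta^T$ of the form treated in Lemma~\ref{approximation upsilon}, so Girsanov's theorem turns the difference at time $0$ into $\mathscr{P}_T[v^1-v^2](x)$ for a Kolmogorov semigroup with drift $\widetilde b_\varepsilon+(F_n)_\varepsilon+\sigma\beta^T$ (the $\psi$-contributions cancel).

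At this point the basic coupling estimate applies: $F_n=-2n(\mathrm{Id}-\Pi)$ is dissipative, hence it only improves the dissipativity constant of $-x+F_n$ and does not spoil $\hat\eta$, while the bounded part of the drift together with $\sigma\beta^T$ is uniformly bounded in $n,\varepsilon,T$; since (Lemma~\ref{basic coupling estimates}) the constants $\hat c,\hat\eta$ depend on the drift only through the supremum of its bounded part, Remark~\ref{basic coupling estimate etendue double suite} gives
\begin{align*}
\big|(\widehat u^1-\widehat u^2)(0,x)-(\widehat u^1-\widehat u^2)(0,a)\big|\le C e^{-\hat\eta T}.
\end{align*}
Passing to the limit in $n$ then $\varepsilon$, the terms $\lambda^i T$ and $\psi$ cancel in this double difference, leaving $|(v^1-v^2)(x)-(v^1-v^2)(a)|\le Ce^{-\hat\eta T}$; letting $T\to+\infty$ and using $v^1(a)=v^2(a)$ yields $v^1\equiv v^2$, after which feeding this common function back into $v^i(x)+\lambda^i T=Y_0^{T,x}$ (now identical BSDEs for $i=1,2$) forces $\lambda^1=\lambda^2$ — alternatively one invokes Theorem~\ref{theorem uniqueness for lambda EBSDE}. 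I expect the genuine obstacle to be the first step, namely justifying that the merely continuous stationary solution $v^i$ is represented by a finite-horizon BSDE, which is precisely where the parabolic comparison principle under Hypothesis~\ref{hypo unicite parabolic PDE} is indispensable; once this bridge is in place the exponential decay is a pure consequence of the coupling estimate, and the cancellation of the linear-in-$T$ term in the $x$-versus-$a$ difference is what makes the argument insensitive to the a priori unknown equality $\lambda^1=\lambda^2$.
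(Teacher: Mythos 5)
Your proof is correct, and it is built from exactly the ingredients the paper uses: the bridge from the merely continuous ergodic solution to a genuine finite-horizon BSDE via the existence and uniqueness results for the parabolic problem \eqref{PDE finite horizon} (this is indeed the crucial step, as you note), followed by the Helmholtz subtraction, penalization--mollification, Bismut--Elworthy identification of $Z$, Girsanov linearization, and the coupling estimate of Lemma \ref{basic coupling estimates} --- i.e.\ the machinery of Theorem \ref{Uniqueness EBSDE}. The organization, however, is reversed, and the reversal comes from a different parabolic lift. The paper represents $v^i$ by the BSDE with generator $f-\lambda^i$ and terminal datum $v^i$, so that parabolic uniqueness gives the \emph{stationary} identification $u^{i,T}(t,x)\equiv v^i(x)$; it must then prove $\lambda^1=\lambda^2$ \emph{first} --- by a direct Girsanov argument on the reflected process, dividing by $T$ and using boundedness of $v^1,v^2$ --- because its second step (rerunning the argument of Theorem \ref{Uniqueness EBSDE}) needs the two linearized generators to coincide, which would fail if different constants $\lambda^i$ sat inside the generators. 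You instead keep the generator equal to $f$ and lift $v^i$ to $w^i(t,x)=v^i(x)+\lambda^i(T-t)$, so the unknown constants enter only through the additive, $x$-independent term $\lambda^i T$ in $Y_0^{T,x;i}$; this term cancels in the double difference between $x$ and $a$, so a single coupling computation yields $v^1\equiv v^2$ with no a priori knowledge of $\lambda^1=\lambda^2$, and equality of the constants then falls out of uniqueness for the (now identical) finite-horizon BSDEs. Each route buys something: the paper's $\lambda$-step needs no regularization at all (it only takes expectations, so the bounded adapted process $\beta_s$ suffices without any Markovian representation), while your version is more economical in that one coupling pass settles both unknowns. One caveat: your parenthetical alternative of invoking Theorem \ref{theorem uniqueness for lambda EBSDE} does not work as stated, since that theorem concerns solutions of the EBSDE \eqref{EBSDE}, and a viscosity solution of \eqref{PDE_ergodic} is not a priori such a solution (constructing the associated $Z$ is precisely the difficulty); but your primary argument never needs it.
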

\begin{proof}
Let $(v^1,\lambda^1)$ and $(v^2,\lambda^2)$ be two continuous viscosity solutions of \eqref{PDE_ergodic}. First we show that $\lambda^1 = \lambda^2$.
Let us fix $0 \leq  t < T < + \infty$, and let us consider $(Y^{1,T,t,x},Z^{1,T,t,x})$ the solution of the following BSDE in finite horizon, $\forall s \in [t,T]$,
\begin{align*}
Y_s^{1,T,t,x} = v^{1}(X_T^{t,x}) + \int_s^T [f(X_r^{t,x},Z_r^{1,T,t,x}) - \lambda^1] \der r +\int_s^T g(X_r^{t,x}) \der K_r^{t,x}  -  \int_s^T Z_r^{1,T,t,x} \der W_r.
\end{align*}
And we define $(Y^{2,T,t,x},Z^{2,T,t,x})$ similarly, replacing $\lambda^1$ by $\lambda^2$ and $v^1$ by $v^2$.
By Lemma \ref{lemme exitence sol de viscosité parabolique}, we deduce that $u^{1,T}(t,x) = Y^{1,T,t,x}_t$ is a viscosity solution of \eqref{PDE finite horizon}. Since $v^1$ is also a viscosity solution of \eqref{PDE finite horizon} with $h=v^1$, it follows from Lemma \ref{lemme unicite sol de viscosité parabolique} that $\forall t \in [0,T], \forall x \in \G$, 
\begin{align*}
u^{1,T}(t,x) = v^{1}(x).
\end{align*}
Of course, similarly, $\forall t \in [0,T]$, $\forall x \in \G$,
\begin{align*}
u^{2,T}(t,x) = v^2(x).
\end{align*}
Then, taking $t=0$, $\forall T > 0$,
\begin{align*}
u^{1,T}(0,x) - u^{2,T}(0,x) &= v^{1}(X_T^x) - v^2(X_T^x) + \int_0^T [f(X_s^x,Z_s^{1,T,x}) - f(X_s^x,Z_s^{2,T,x})]  \der s\\ &~~~~~~+(\lambda^2-\lambda^1)T - \int_0^T \left(Z_s^{1,T,x} - Z_s^{2,T,x}\right) \der W_s\\
&= v^{1}(X_T^x) - v^2(X_T^x) - \int_0^T (Z_s^{1,T,x} - Z_s^{2,T,x}) ( -\beta_s \der s + \der W_s),
\end{align*}
where, $\forall s \in [0,T]$,
\begin{align*}
\beta_s = 
\left\{
\begin{array}{ll}
\frac{(f(X_s^x,Z_s^{1,T,x}) - f(X_s^x,Z_s^{2,T,x})) \transpose (Z_s^{1,T,x} - Z_s^{2,T,x})}{|Z_s^{1,T,x} - Z_s^{2,T,x}|^2}, & \text{ if } Z_s^{1,T,x} \neq Z_s^{2,T,x},\\
0, & \text{otherwise}.
\end{array}
\right.
\end{align*}
Since $(\beta_s)_{s \in [0,T]}$ is a progressively measurable and bounded process, by Girsanov's theorem, there exists a new probability $\Q^T$ equivalent to $\Pb$ under which $( W_t - \int_0^t \beta_s \der s)_{t \in [0,T]}$ is a Brownian motion. Taking the expectation with respect to this new probability, we get
\begin{align*}
\frac{u^{1,T}(0,x) - u^{2,T}(0,x)}{T} = \frac{\E^{Q^T}(v^{1}(X_T^x) - v^{2}(X_T^x))}{T} + \lambda^2 - \lambda^1.
\end{align*}
Since $v^1$ and $v^2$ are continuous and therefore bounded on $\G$, letting $T \rightarrow + \infty$ we deduce that
\begin{align*}
\lambda^1 = \lambda^2.
\end{align*}
Applying the same argument as that in Theorem \ref{Uniqueness EBSDE}, we deduce the uniqueness.
\end{proof}

\section{Large time behaviour}
\subsection{First behaviour}
We recall that $(Y_s^{T,x},Z_s^{T,x})_{s \geq 0}$ denotes the solution of the finite horizon BSDE (\ref{BSDE}) with $t=0$ and that $(Y_s^{x},Z_s^{x},\lambda)_{s \geq 0}$ denotes the solution of the EBSDE (\ref{EBSDE}).

\begin{théo}\label{theoreme first behaviour}
Assume that the Hypotheses \ref{hypo SDE reflected}, \ref{hypo_G} and \ref{hypo_path_dependent} hold true (path dependent case), then, $\forall x \in \G$, $\forall T > 0$:
\begin{align*}
\left| \frac{Y_0^{T,x}}{T} - \lambda \right| \leq \frac{C}{T}.
\end{align*}
In particular,
\begin{align*}
\frac{Y_0^{T,x}}{T} \underset{T \rightarrow + \infty}{\longrightarrow} \lambda,
\end{align*}
uniformly in $\G$.\\
Assume that the Hypotheses \ref{hypo SDE reflected}, \ref{hypo_G}, \ref{hypo BSDE markovien} and \ref{hypo unicite parabolic PDE} hold true (Markovian case). Then, $\forall x \in \G$, $\forall T > 0$:
\begin{align*}
\left| \frac{Y_0^{T,x}}{T} - \lambda \right| \leq \frac{C}{T}.
\end{align*}
i.e.
\begin{align*}
\left| \frac{u(T,x)}{T} - \lambda \right|\leq \frac{C}{T},
\end{align*}
where $u$ is the viscosity solution of \eqref{HJB Neumann cauchy changement de temps}. In particular,
\begin{align*}
\frac{u(T,x)}{T} = \frac{Y_0^{T,x}}{T} \underset{T \rightarrow + \infty}{\longrightarrow} \lambda,
\end{align*}
uniformly in $\G$.
\end{théo}
\begin{proof}
The proof is identical to the proof of Theorem 4.1 in \cite{LARGE_TIME_BEHAVIOUR_HU_MADEC_RICHOU}. Note that the proof is even simpler since we work with a bounded subset $G$ of $\R^d$ and then for any probability $\Q^T$,  $\E^{\Q^T}[\sup_{0 \leq t \leq T} |X_t|^\mu] \leq C$, where $C$ depends only on $G$ and $\mu$. Note that the proof gives an important result
\begin{align}\label{estimee 1 sur w_T}
|u_T(0,x) - \lambda T - v(x)| \leq C,
\end{align}
which will be useful for what follow.
Finally note that for the Markovian case, Hypothesis \ref{hypo unicite parabolic PDE} is added in order to obtain uniqueness of viscosity solutions of \eqref{HJB Neumann cauchy changement de temps}.
\end{proof}

\subsection{Second and third behaviour}
In this section we introduce a new set of hypothesis without loss of generality. Note that it is the same as Hypothesis $\ref{hypo BSDE markovien}$ but with modified assumptions for $b$. However we write it again for reader's convenience. The remark immediately following this new set of hypothesis justifies the fact that there is no loss of generality.
Let us denote by $(Y_s^{t,x},Z_s^{t,x},\lambda)_{s \geq 0}$ the solution of the EBSDE (\ref{EBSDE}) when $X^x$ is replaced by $X^{t,x}$. We recall that this solution satisfies
\begin{align}\label{Zs = Zs1 + Zs2}
Z_s^{t,x} = \nabla v^1(X_s^{t,x}) \sigma + Z_s^2.
\end{align}

\begin{hypo}\label{hypo_second_troisieme_comportement}
There exists $C > 0$ such that
\begin{enumerate}
\item $b : \R^d \rightarrow \R^d$ is $\mathscr{C}^1$ Lipschitz and dissipative (i.e. $\exists \eta >0$ such that $\forall x,y \in \R^d$, $\langle b(x) - b(y), x-y \rangle \leq -\eta|x-y|^2$).
\item $\xi^T = h(X_T^x)$, where $h : \G \rightarrow \R $ is continuous.
\item $\forall x \in \G$, $\forall z,z' \in \R^{1 \times k}$, $|f(x,z)-f(x,z')|\leq C|z-z'|$.
\item $\forall z \in \R^{1 \times k}$, $f(\cdot,z)$ is continuous.
\item $g \in \mathscr{C}^1_{\text{lip}}(\G)$.
\end{enumerate}
\end{hypo}
\begin{rema}\label{rema changement b en dissipatif}
Note that assuming $b$ to be $\mathscr{C}^1$ Lipschitz and dissipative is not restrictive. Indeed, let us consider $b : \G \rightarrow \R^d$ only Lipschitz. Let us recall that the purpose of this paper is to study the large time behaviour of the viscosity solution of
\begin{align*}
\left\{ 
\begin{array}{ll}
\frac{\partial u(t,x)}{\partial t} = \LL u(t,x) + f(x,\nabla u(t,x)G) , & \forall (t,x) \in \R_+ \times \G,\\
\frac{\partial u(t,x)}{\partial n} + g(x) = 0, & \forall (t,x) \in \R_+ \times \partial G,\\
u(0,x) = h(x),&\forall x \in G.
\end{array}
\right.
\end{align*}
Now, we define, $\forall x \in \R^d$, $\widetilde{b}(x) := -x + (b(\Pi(x)) +\Pi(x))$. Note that $\widetilde{b}$ is equal to $b$ on $\G$. Furthermore,
\begin{align*}
\langle \widetilde{b}(x) , \nabla u(t,x) \rangle + f(x,\nabla u(t,x) \sigma) = \langle -x , \nabla u(t,x) \rangle + \widetilde{f}(x, \nabla u(t,x) \sigma),
\end{align*}
where $\widetilde{f}(x,z) = f(x,z) + \langle b(\Pi(x)) +\Pi(x) , z \sigma^{-1} \rangle$ is a continuous function in $x$ and Lipschitz in $z$.
Therefore, under our assumptions, we can always consider the case $b$ being $\mathscr{C}^1$ Lipschitz and dissipative by replacing $b$ by $(x \mapsto -x)$ and $f$ by $\widetilde{f}$ if necessary.
\end{rema}

\begin{théo}\label{theoreme second and third behaviour}
Assume that the Hypotheses \ref{hypo SDE reflected}, \ref{hypo_G}, \ref{hypo unicite parabolic PDE} and \ref{hypo_second_troisieme_comportement} hold true. Then there exists $L \in \R$ such that,
\begin{align*}
\forall x \in \G, ~Y_0^{T,x} - \lambda T - Y_0^x \underset{T \rightarrow + \infty}{\longrightarrow} L,
\end{align*}
i.e.
\begin{align*}
\forall x \in \G,~ u(T,x) - \lambda T - v(x) \underset{T \rightarrow + \infty}{\longrightarrow} L,
\end{align*}
where $u$ is the viscosity solution of \eqref{HJB Neumann cauchy changement de temps} and $v$ is the viscosity solution of \eqref{PDE_ergodic}.
Furthermore the following rate of convergence holds
\begin{align*}
|Y_0^{T,x}-\lambda T - Y_0^x - L| \leq Ce^{-\hat{\eta} T},
\end{align*}
i.e. 
\begin{align*}
|u_T(0,x)-\lambda T - v(x) - L| \leq Ce^{-\hat{\eta} T}.
\end{align*}
\end{théo}
\begin{proof}
Let us start by defining
\begin{align*}
&u_T(t,x) := Y_t^{T,t,x}\\
&w_T(t,x) := u_T(t,x) - \lambda (T-t) - v(x).
\end{align*}
We recall that $Y_s^{T,t,x} = u_T(s,X_s^{t,x})$ and that $Y_s^x = v(X_s^x)$.

Note that $(x \mapsto w_T(0,x))$ is continuous and bounded uniformly in $T$ by (\ref{estimee 1 sur w_T}). Therefore one can extend the definition of $w_T(0,x)$ to the whole $\R^d$ into a continuous and uniformly bounded in $T$ function by setting $w_{T,\R^d}(0,x) := w_T(0, \Pi(x))$ where $\Pi$ is the projection on $\G$.

Let us first state the following proposition whose proof is relegated to the next subsection.

\begin{prop}\label{prop1:report2}
\begin{align}\label{changement de temps w}
w_T(0,x) = w_{T+S}(S,x), \forall x \in \G.
\end{align} 
\end{prop}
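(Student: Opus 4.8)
The plan is to prove the identity $w_T(0,x) = w_{T+S}(S,x)$ by exploiting the fact that both sides are built from viscosity solutions of the parabolic PDE \eqref{PDE finite horizon}, and then to invoke the uniqueness result of Lemma \ref{lemme unicite sol de viscosit� parabolique}. Recall that $w_T(t,x) = u_T(t,x) - \lambda(T-t) - v(x)$, so the identity \eqref{changement de temps w} unfolds to
\begin{align*}
u_T(0,x) - \lambda T - v(x) = u_{T+S}(S,x) - \lambda T - v(x), \quad \forall x \in \G,
\end{align*}
which after cancellation is equivalent to the single statement $u_T(0,x) = u_{T+S}(S,x)$ for all $x \in \G$. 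Thus the entire content of the proposition reduces to a time-translation invariance property of the parabolic value function $u_T$, and the $\lambda$- and $v$-terms are present only cosmetically.

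First I would recall from Lemma \ref{lemme exitence sol de viscosit� parabolique} that $u_T(t,x) = Y_t^{T,t,x}$ is the continuous viscosity solution of \eqref{PDE finite horizon} on $[0,T] \times G$ with terminal data $h$ at time $T$. The key observation is that the coefficients $b$, $\sigma$, $f$, $g$ are all time-homogeneous, so the parabolic problem is autonomous. I would make this precise by fixing $S \geq 0$ and examining $\widehat{u}(t,x) := u_{T+S}(t+S,x)$ for $t \in [0,T]$. Since $u_{T+S}(\cdot,\cdot)$ solves \eqref{PDE finite horizon} on $[0,T+S]\times G$ with horizon $T+S$ and terminal condition $u_{T+S}(T+S,x) = h(x)$, the shifted function $\widehat{u}$ solves exactly the same PDE (the operator $\LL$, the nonlinearity $f$, and the Neumann datum $g$ being independent of the time variable) on $[0,T]\times G$, with terminal condition $\widehat{u}(T,x) = u_{T+S}(T+S,x) = h(x)$. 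Hence $\widehat{u}$ is a continuous viscosity solution of \eqref{PDE finite horizon} with horizon $T$ and the same terminal data $h$ as $u_T$.

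At this point I would invoke the uniqueness statement of Lemma \ref{lemme unicite sol de viscosit� parabolique}, valid under Hypotheses \ref{hypo SDE reflected}, \ref{hypo_G}, \ref{hypo BSDE markovien} and \ref{hypo unicite parabolic PDE} (all in force here, noting that Hypothesis \ref{hypo_second_troisieme_comportement} strengthens Hypothesis \ref{hypo BSDE markovien}), to conclude that $\widehat{u} \equiv u_T$ on $[0,T]\times \G$. Evaluating at $t=0$ gives $u_T(0,x) = \widehat{u}(0,x) = u_{T+S}(S,x)$, which is the desired identity once the $\lambda$- and $v$-terms are reinstated. The main obstacle I anticipate is purely a matter of careful bookkeeping: one must verify that the time-shifted function $\widehat{u}$ is genuinely a viscosity solution in the precise sense required (checking that sub/superjet conditions transfer correctly under the translation $t \mapsto t+S$, and that the Neumann boundary condition is preserved), and that the hypotheses guaranteeing uniqueness really do hold on the shifted horizon. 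A purely probabilistic alternative would bypass the PDE entirely: by the time-homogeneity of the reflected SDE \eqref{SDE_reflected} and the Markov property, $Y_S^{T+S,S,x}$ and $Y_0^{T,0,x}$ are representations of the same BSDE over a window of length $T$ with identical terminal functional $h$, so they coincide by uniqueness of BSDE solutions; I would mention this as the cleaner route if the viscosity-solution bookkeeping proves cumbersome.
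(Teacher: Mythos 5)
Your proposal is correct and follows essentially the same route as the paper: both reduce the identity to $u_T(0,x) = u_{T+S}(S,x)$ and obtain it from uniqueness of viscosity solutions of the autonomous parabolic problem \eqref{PDE finite horizon} with the same terminal data $h$, your version merely spelling out the time-shift $\widehat{u}(t,x) = u_{T+S}(t+S,x)$ that the paper leaves implicit.
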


For every $T \geq t$, the process $(w_T(s, X_s^{t,x}))_{s \in [t,T]}$  satisfies the following BSDE in infinite horizon, $\forall t \leq s \leq T < + \infty$,
\begin{align}
w_T(s,X_s^{t,x}) &= w_T(T,X_T^{t,x}) + \int_s^{T} [f(X_r^{t,x},Z_r^{T,t,x}) - f(X_r^{t,x},Z_r^{t,x})] \der r \nonumber \\
& ~~~~~~~~~~~~~~~~~~~~ - \int_s^T (Z_r^{T,t,x} - Z_r^{t,x}) \der W_r \nonumber \\
&= h(X_T^{t,x}) - v(X_T^{t,x})  + \int_s^{T} [f(X_r^{t,x},Z_r^{T,t,x}) - f(X_r^{t,x},Z_r^{t,x})] \der r \nonumber \\
& ~~~~~~~~~~~~~~~~~~~~~~~~~~~  - \int_s^T (Z_r^{T,t,x} - Z_r^{t,x}) \der W_r.\label{equation verifiee par w}
\end{align}

Since we do not have a basic coupling estimate  for the reflected process $X^{t,x}$, we will use an approximation procedure. We fix infinitely differentiable functions $\rho_{\varepsilon} : \R^d \rightarrow \R^+$ bounded together with their derivatives of all order, such that: $\int_{\R^d}\rho_{\varepsilon}(x) \der x = 1$ and
\begin{align*}
\text{supp}(\rho_{\varepsilon}) \subset \left\{ \xi \in \R^d: |\xi| \leq \varepsilon \right\}
\end{align*}
where $\text{supp}$ denotes the support. Then we define $\forall n \in \N$,
\begin{align*}
(F_n)_{\varepsilon} (x) = \int_{\R^d} \rho_{\varepsilon}(y) F_n(x-y) \der y.
\end{align*}
It is well known that $(F_n)_\varepsilon$ is $\mathscr{C}^{\infty}$. Furthermore, $(F_n)_{\varepsilon}$ is still $0$-dissipative. 
Let $(X_s^{t,x, n, \varepsilon})_{ s \geq t}$ be the solution of
\begin{align*}
X_s^{t,x,n,\varepsilon} = x + \int_t^s (b + (F_n)_{\varepsilon})(X_r^{t,x,n,\varepsilon}) \der r  + \int_t^s \sigma \der W_r,~~~\forall s \geq t,
\end{align*}
and $(Y_s^{2,t,x,\alpha,n,\varepsilon}, Z_s^{2,t,x, \alpha,n,\varepsilon})_{s \geq t}$ be the solution of the following monotonic BSDE in infinite horizon, $\forall t \leq s \leq T < +\infty$,
\begin{align*}
Y_s^{2,t,x,\alpha,n,\varepsilon } = Y_T^{2,t,x,\alpha,n,\varepsilon} &+ \int_s^T \left[f(X_r^{t,x,n,\varepsilon},Z_r^{2,t,x,\alpha,n,\varepsilon }) -  \alpha Y_r^{2,t,x,\alpha,n,\varepsilon}\right] \der r\\
&  - \int_s^T Z_r^{2,t,x,\alpha,n,\varepsilon } \der W_r.
\end{align*}
By the same argument as that of Theorem 4.4 in \cite{MADEC_ERGODIC}, there exist sequences $\varepsilon_m \underset{m \rightarrow + \infty}{ \longrightarrow} 0$, $\beta(n) \underset{n \rightarrow + \infty}{\longrightarrow} + \infty$ and $\alpha_k \underset{k \rightarrow + \infty}{\longrightarrow} 0$ such that for all $T \geq t$, 
\begin{align}\label{convergence dans M2 du Z ergodique}
\lim_{k \rightarrow + \infty} \lim_{n \rightarrow + \infty} \lim_{m \rightarrow + \infty} \E \int_t^T \left|Z_s^{2,t,x,\alpha_k, \beta(n), \varepsilon_m } -  Z_s^{2}\right|^2 \der s = 0.
\end{align}
In what follows, we will use the following notation. If $q^{\alpha , n ,\varepsilon}$ denotes a function depending on the parameters $\alpha$, $n$ and $\varepsilon$, then
\begin{align*}
\lim_{\alpha, n, \varepsilon} q^{\alpha , n, \varepsilon} := \lim_{k \rightarrow + \infty} \lim_{n \rightarrow + \infty} \lim_{m \rightarrow + \infty} q^{\alpha_k , \beta(n), \varepsilon_m } .
\end{align*}

Now, if we define, for all $s \geq t$,
\begin{align*}
\widetilde{Z}_s^{t,x,\alpha,n,\varepsilon} := (\nabla v^1)_{\R^d}(X_s^{t,x,n,\varepsilon}) \sigma  + Z_s^{2,t,x, \alpha, n,\varepsilon},
\end{align*}
by the dominated convergence theorem and thanks to \eqref{Zs = Zs1 + Zs2} and \eqref{convergence dans M2 du Z ergodique},  for all $T \geq t$
\begin{align}\label{convergence Ztilde vers Z}
\lim_{\alpha, n,\varepsilon}\E\int_t^T |\widetilde{Z}_s^{t,x,\alpha,n,\varepsilon}  - Z_s^{t,x}|^2 \der s = 0.
\end{align}

Note that by Theorem 4.2 in \cite{MA_ZHANG_REPRESENTATION_THEOREMS_FOR_BACKWARD_STOCHASTIC_DIFFERENTIAL_EQUATIONS}, if we define $v^{2, \alpha , n, \varepsilon}(x) := Y_0^{x, \alpha , n,\varepsilon}$, then $v^{2, \alpha, n ,\varepsilon}$ is $\mathscr{C}^1$ and $\forall s \geq t$,
\begin{align*}
Z_s^{2,t,x, \alpha , n,\varepsilon} = \transpose\nabla v^{2, \alpha , n, \varepsilon }(X_s^{t,x, n,\varepsilon}) \sigma.
\end{align*}
Therefore, we have the following representation, $\forall s \geq t$,
\begin{align}\label{representation Zs alpha n epsilon}
\widetilde{Z}_s^{t,x, \alpha, n,\varepsilon } &= \transpose\nabla (v^1)_{\R^d}(X_s^{t,x, n,\varepsilon}) \sigma +  \transpose\nabla v^{2, \alpha , n,\varepsilon}(X_s^{t,x, n,\varepsilon}) \sigma \nonumber \\ 
& =: \transpose\nabla \widetilde{v}^{\alpha , n,\varepsilon } (X_s^{t,x, n,\varepsilon}) \sigma.
\end{align}
Let us denote by $(\overline{Y}_s^{T,t,x,\alpha,n,\varepsilon}, \overline{Z}_s^{T,t,x,\alpha,n,\varepsilon})_{s \geq t}$ the solution of the following BSDE in finite horizon, $\forall s \in [t,T]$,
\begin{align}
\overline{Y}_s^{T,t,x,\alpha,n,\varepsilon} &= w_{T, \R^d}(T,X_{T}^{t,x,n,\varepsilon})  - \int_s^{T} \overline{Z}_r^{T,t,x,\alpha,n,\varepsilon} \der W_r \nonumber \\
&~~~~  + \int_s^{T} \left[ f_{\R^d}(X_r^{t,x,n,\varepsilon} , \overline{Z}_r^{T,t,x,\alpha,n,\varepsilon} + \widetilde{Z}_r^{t,x,\alpha,n,\varepsilon}) - f_{\R^d}(X_r^{t,x,n,\varepsilon},\widetilde{Z}_r^{t,x,\alpha,n,\varepsilon})  \right] \der r \nonumber \\
&= (h-v)_{\R^d}(X_{T}^{t,x,n,\varepsilon}) + \int_s^{T} \widetilde{f}^{\alpha , n,\varepsilon}(s, \overline{Z}_r^{T,t,x,\alpha,n,\varepsilon} ) \der r - \int_s^{T} \overline{Z}_r^{T,t,x,\alpha,n,\varepsilon} \der W_r, \label{equation verifiee par Y alpha, n , epsilon}
\end{align}
where, for all $r \geq t$,  $z \in \R^{1\times d}$,
\begin{align*}
\widetilde{f}^{\alpha,n,\varepsilon}(r,z) :=  f_{\R^d}(X_r^{t,x,n,\varepsilon} , z + \widetilde{Z}_r^{t,x,\alpha,n,\varepsilon}) - f_{\R^d}(X_r^{t,x,n,\varepsilon},\widetilde{Z}_r^{t,x,\alpha,n,\varepsilon}).
\end{align*}

We define, for all $z \in \R^{1 \times d}$,
\begin{align*}
\widetilde{f}(r, z) := f_{\R^d}(X_r^{t,x} , z + Z_r^{t,x})  - f_{\R^d}(X_r^{t,x},Z_r^{t,x}).
\end{align*}

Now we can apply the stability theorem in \cite{BRIAND_HU_STABILITY_BSDE} to show the following convergence result:
\begin{prop}\label{prop2:report2}$\forall x \in \G$,
\begin{align}\label{convergence vers wT}
\lim_{\alpha, n,\varepsilon } \overline{Y}_t^{T,t,x,\alpha, n,\varepsilon} = w_{T}(t,x).
\end{align}
\end{prop}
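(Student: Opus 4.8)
The plan is to recognise both processes as solutions of finite horizon BSDEs driven by the same Brownian motion but with different (random, time dependent) data, and then to invoke the stability estimate of \cite{BRIAND_HU_STABILITY_BSDE}. First I would rewrite the target. Setting $\overline{Z}_r := Z_r^{T,t,x} - Z_r^{t,x}$ and using that $X_r^{t,x} \in \G$, so that $f_{\R^d}(X_r^{t,x},\cdot) = f(X_r^{t,x},\cdot)$, relation \eqref{equation verifiee par w} shows that the pair $(w_T(s,X_s^{t,x}),\overline{Z}_s)_{s \in [t,T]}$ solves the BSDE with terminal value $(h-v)(X_T^{t,x})$ and driver $\widetilde{f}(r,z) = f_{\R^d}(X_r^{t,x},z+Z_r^{t,x}) - f_{\R^d}(X_r^{t,x},Z_r^{t,x})$. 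The approximating pair $(\overline{Y}_s^{T,t,x,\alpha,n,\varepsilon},\overline{Z}_s^{T,t,x,\alpha,n,\varepsilon})$ solves, by \eqref{equation verifiee par Y alpha, n , epsilon}, the BSDE with terminal value $(h-v)_{\R^d}(X_T^{t,x,n,\varepsilon})$ and driver $\widetilde{f}^{\alpha,n,\varepsilon}(r,z)$. Both drivers depend on $z$ only (there is no $y$ dependence) and are Lipschitz in $z$ with the common constant $C$, since each is a difference $f(\cdot,z+\zeta) - f(\cdot,\zeta)$; hence the framework of \cite{BRIAND_HU_STABILITY_BSDE} applies directly.

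The stability estimate then reduces the claim to two convergences, both taken in the iterated sense $\lim_{\alpha,n,\varepsilon}$ of the statement. The first is the convergence of the terminal data: since the penalised and mollified forward process converges, $X_T^{t,x,n,\varepsilon} \to X_T^{t,x}$ with $X_T^{t,x} \in \G$, and since $(h-v)_{\R^d} = (h-v)\circ \Pi$ is continuous and bounded on $\R^d$ (as $h,v$ are continuous on the compact $\G$), dominated convergence gives $\E|(h-v)_{\R^d}(X_T^{t,x,n,\varepsilon}) - (h-v)(X_T^{t,x})|^2 \to 0$. The second is the convergence of the drivers evaluated along the limiting control, i.e. $\E\int_t^T |\widetilde{f}^{\alpha,n,\varepsilon}(r,\overline{Z}_r) - \widetilde{f}(r,\overline{Z}_r)|^2 \der r \to 0$. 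Splitting this difference by adding and subtracting $f_{\R^d}(X_r^{t,x,n,\varepsilon},\overline{Z}_r+Z_r^{t,x})$ and $f_{\R^d}(X_r^{t,x,n,\varepsilon},Z_r^{t,x})$, the Lipschitz property in $z$ bounds the terms involving $\widetilde{Z}^{t,x,\alpha,n,\varepsilon}-Z^{t,x}$ by $C|\widetilde{Z}_r^{t,x,\alpha,n,\varepsilon}-Z_r^{t,x}|$, whose $L^2$ norm tends to $0$ by \eqref{convergence Ztilde vers Z}; the remaining terms, of the form $f_{\R^d}(X_r^{t,x,n,\varepsilon},\cdot) - f_{\R^d}(X_r^{t,x},\cdot)$, are controlled using the modulus of continuity of Hypothesis \ref{hypo unicite parabolic PDE}, namely $m\big((1+|\cdot|)|\Pi(X_r^{t,x,n,\varepsilon})-\Pi(X_r^{t,x})|\big)$, together with the $1$-Lipschitz continuity of $\Pi$ and the forward convergence $X^{t,x,n,\varepsilon} \to X^{t,x}$.

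Applying the estimate of \cite{BRIAND_HU_STABILITY_BSDE} then yields $\E\sup_{s\in[t,T]}|\overline{Y}_s^{T,t,x,\alpha,n,\varepsilon} - w_T(s,X_s^{t,x})|^2 \to 0$ in the iterated limit, and evaluating at $s=t$, where $X_t^{t,x}=x$, gives precisely \eqref{convergence vers wT}.

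The main obstacle is the driver convergence, and more precisely the passage to the limit in the term coming from the continuity of $f$ in its first variable. Here $f$ is only continuous (not Lipschitz) in $x$, so pointwise convergence must be upgraded to $L^2$ convergence; this is where the uniform modulus $m$ of Hypothesis \ref{hypo unicite parabolic PDE} is essential, since it controls the oscillation of $f$ in $x$ uniformly in $z$ in terms of $(1+|z|)|x-y|$, after which uniform integrability (furnished by the uniform $L^p$ bounds on the forward processes and the $L^2$ bound on $\overline{Z}$) lets one conclude. A secondary technical point is that the three limits must be taken in the prescribed order $m\to\infty$, $n\to\infty$, $k\to\infty$ along the subsequences $\varepsilon_m,\beta(n),\alpha_k$ selected so that \eqref{convergence dans M2 du Z ergodique}, hence \eqref{convergence Ztilde vers Z}, holds; the forward convergence $X^{t,x,n,\varepsilon}\to X^{t,x}$, obtained by first removing the mollification as $\varepsilon\to 0$ and then the penalisation as $n\to\infty$, is compatible with this order.
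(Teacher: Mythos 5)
Your proposal is correct and follows essentially the same route as the paper: both identify \eqref{equation verifiee par w} and \eqref{equation verifiee par Y alpha, n , epsilon} as finite-horizon BSDEs with $z$-Lipschitz drivers, verify assumptions (A2)--(A3) of Lemma 2.3 in \cite{BRIAND_HU_STABILITY_BSDE} by splitting the driver difference into the $z$-terms (controlled by \eqref{convergence Ztilde vers Z}) and the $x$-continuity terms, and treat the terminal condition by continuity and boundedness of $(h-v)_{\R^d}$ together with the forward convergence. The only (minor) divergence is in the $x$-continuity terms: contrary to your claim that the modulus $m$ of Hypothesis \ref{hypo unicite parabolic PDE} is ``essential'' there, the paper does not use it at all at this step, but instead combines convergence in $\Pb \otimes \der t$-measure of $X^{t,x,n,\varepsilon}-X^{t,x}$ with the uniform integrability furnished by the bound $|f_{\R^d}(X_r^{t,x,n,\varepsilon},Z_r^{T,t,x})-f_{\R^d}(X_r^{t,x},Z_r^{T,t,x})|^2 \leq C(1+|Z_r^{T,t,x}|^2)$, which follows from the Lipschitz property in $z$ alone; your variant via $m$ is also legitimate (Hypothesis \ref{hypo unicite parabolic PDE} is assumed in Theorem \ref{theoreme second and third behaviour}), provided you note that the square of the $m$-bound is itself dominated by $C(1+|Z_r^{T,t,x}|^2)$ so that uniform integrability is available without growth assumptions on $m$.
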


The proof of this proposition will be given in the next subsection.
\medskip
We define 
\begin{align*}
\overline{w}_T^{\alpha, n , \varepsilon}(t,x) = \overline{Y}_t^{T,t,x,\alpha, n,\varepsilon }.
\end{align*}
Similarly to equation \eqref{changement de temps w}, we deduce that, $\forall T,S \geq 0$
\begin{align}\label{eq changement de temps w alpha n epsilon}
\overline{w}_{T}^{\alpha, n , \varepsilon}(0,x) = \overline{w}_{T+S}^{\alpha, n , \varepsilon}(S,x).
\end{align}

Now we are in force to apply the method exposed in \cite{LARGE_TIME_BEHAVIOUR_HU_MADEC_RICHOU} for the quantity $\overline{w}_T^{\alpha, n , \varepsilon}(0,x)$ with slight modifications.

First we establish the following proposition whose proof is also relegated to the next subsection.
\begin{prop}\label{lemm estimates Y alpha n epsilon}
Under the hypotheses of Theorem \ref{theoreme second and third behaviour}, $\exists C >0$, $\forall x,y \in \G$, $\forall T > 0$, $\forall 0 < T' \leq T$, $\exists C_{T'}>0$,
\begin{align*}
&|\overline{w}_T^{\alpha, n , \varepsilon}(0,x)| \leq C,\\
&|\nabla_x \overline{w}_T^{\alpha, n , \varepsilon}(0,x)| \leq \frac{C_{T'}}{\sqrt{T'}}, \\
&| \overline{w}_T^{\alpha, n , \varepsilon}(0,x) -  \overline{w}_T^{\alpha, n , \varepsilon}(0,y)| \leq Ce^{- \hat{\eta} T}.
\end{align*} 
We stress the fact that $C$ depends only on $\eta$, $\sigma$, $\G$. The constant $C_{T'}$ depends only on the same constants and $T'$.
\end{prop}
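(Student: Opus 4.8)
The plan is to prove the three bounds in the order (i), (iii), (ii), since the first two rest on the same Girsanov linearisation while the gradient bound requires an additional smoothing argument. As $f$ is Lipschitz in its second variable and $\widetilde{f}^{\alpha,n,\varepsilon}(r,0)=0$, I linearise the driver in \eqref{equation verifiee par Y alpha, n , epsilon}: writing $\overline{Z}_r:=\overline{Z}_r^{T,0,x,\alpha,n,\varepsilon}$, there is a progressively measurable $\gamma_r^{\alpha,n,\varepsilon}$ with $\widetilde{f}^{\alpha,n,\varepsilon}(r,\overline{Z}_r)=\gamma_r^{\alpha,n,\varepsilon}\overline{Z}_r$ and $|\gamma_r^{\alpha,n,\varepsilon}|\le \mathrm{Lip}(f)$, uniformly in $\alpha,n,\varepsilon$; explicitly $\gamma_r^{\alpha,n,\varepsilon}=\Upsilon(r,X_r^{0,x,n,\varepsilon})$ is exactly of the form treated in Lemma \ref{approximation upsilon} (with $\zeta=\overline{Z}_r+\widetilde{Z}_r^{0,x,\alpha,n,\varepsilon}$, $\zeta'=\widetilde{Z}_r^{0,x,\alpha,n,\varepsilon}$). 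Girsanov's theorem then yields an equivalent measure $\Q$ under which $\overline{w}_T^{\alpha,n,\varepsilon}(0,x)=\E^{\Q}\big[(h-v)_{\R^d}(X_T^{0,x,n,\varepsilon})\big]=\mathscr{P}_T^{\alpha,n,\varepsilon}[(h-v)_{\R^d}](x)$, where $\mathscr{P}_T^{\alpha,n,\varepsilon}$ is the Kolmogorov semigroup of the SDE with drift $(b+(F_n)_\varepsilon)(\cdot)+\sigma\gamma^{\alpha,n,\varepsilon}$. Since $h$ and $v$ are continuous on the compact set $\G$, the terminal datum $(h-v)_{\R^d}$ is bounded by a constant independent of $\alpha,n,\varepsilon$, which gives the first bound $|\overline{w}_T^{\alpha,n,\varepsilon}(0,x)|\le C$ at once; the same argument started from any time $T'$ gives the uniform bound $\sup_{y}|\overline{w}_T^{\alpha,n,\varepsilon}(T',y)|\le C$, to be used below.

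For the contraction estimate (iii) I proceed exactly as in the proof of Theorem \ref{Uniqueness EBSDE}. The drift of the $\Q$-SDE splits as a dissipative part $b+(F_n)_\varepsilon$ plus the bounded part $\sigma\gamma^{\alpha,n,\varepsilon}$: since $b$ is $\eta$-dissipative and $(F_n)_\varepsilon$ is $0$-dissipative, the sum is $\eta$-dissipative with $\eta$ independent of $n,\varepsilon$, while $|\sigma\gamma^{\alpha,n,\varepsilon}|\le |\sigma|\,\mathrm{Lip}(f)$ uniformly. Approximating $\gamma^{\alpha,n,\varepsilon}$ by the Lipschitz sequence supplied by Lemma \ref{approximation upsilon}, I invoke the basic coupling estimate in its iterated-limit form (Lemma \ref{basic coupling estimates} together with Remark \ref{basic coupling estimate etendue double suite}) to obtain $|\mathscr{P}_T^{\alpha,n,\varepsilon}[(h-v)_{\R^d}](x)-\mathscr{P}_T^{\alpha,n,\varepsilon}[(h-v)_{\R^d}](y)|\le \hat{c}(1+|x|^2+|y|^2)e^{-\hat{\eta}T}\,\|(h-v)_{\R^d}\|_0$. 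The decisive point, guaranteed by Lemma \ref{basic coupling estimates}, is that $\hat{c},\hat{\eta}$ depend on the coefficients only through the dissipativity constant $\eta$ (uniform) and the supremum of the bounded part (uniform), hence are independent of $\alpha,n,\varepsilon$. Restricting to $x,y\in\G$, where $1+|x|^2+|y|^2$ is bounded by a constant depending only on $\G$, yields the third bound $|\overline{w}_T^{\alpha,n,\varepsilon}(0,x)-\overline{w}_T^{\alpha,n,\varepsilon}(0,y)|\le Ce^{-\hat{\eta}T}$.

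The gradient estimate (ii) is the core difficulty and is where the regularisation pays off. By the Markov property of the regularised system, $\overline{w}_T^{\alpha,n,\varepsilon}(0,x)$ equals the time-$0$ value of the BSDE run on $[0,T']$ with terminal datum $\overline{w}_T^{\alpha,n,\varepsilon}(T',X_{T'}^{0,x,n,\varepsilon})$ at time $T'$, a function bounded by $C$ by the first step. Since $b+(F_n)_\varepsilon$ is continuously differentiable and dissipative and $\sigma$ is invertible, the Bismut--Elworthy--Li representation of Theorem~4.2 in \cite{FUHRMAN_TESSITORE_BISMUT_ELWORTHY} applies on $[0,T']$ and gives, schematically, $\nabla_x\overline{w}_T^{\alpha,n,\varepsilon}(0,x)\,h=\tfrac1{T'}\E^{\Q}\big[\overline{w}_T^{\alpha,n,\varepsilon}(T',X_{T'}^{0,x,n,\varepsilon})\int_0^{T'}\langle\sigma^{-1}\nabla_x X_s^{0,x,n,\varepsilon}h,\der\widetilde W_s\rangle\big]$. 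The dissipativity of $b+(F_n)_\varepsilon$ forces the flow derivative to contract, $|\nabla_x X_s^{0,x,n,\varepsilon}|\le e^{-\eta s}\le 1$ uniformly in $n,\varepsilon$ (and independently of the local Lipschitz constant of $(F_n)_\varepsilon$, which diverges in $n$); invertibility of $\sigma$ bounds $\sigma^{-1}$; and It\^o's isometry makes the stochastic integral of order $\sqrt{T'}$ in $L^2(\Q)$, so that division by $T'$ produces the factor $1/\sqrt{T'}$. Together with the uniform bound on the terminal datum this gives $|\nabla_x\overline{w}_T^{\alpha,n,\varepsilon}(0,x)|\le C_{T'}/\sqrt{T'}$ with $C_{T'}$ depending only on $\eta$, $\sigma$, $\G$ and $T'$.

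The main obstacle is precisely the uniformity asserted in the statement, namely that $C$ and $C_{T'}$ depend only on $\eta$, $\sigma$, $\G$ (and $T'$) and not on $\alpha,n,\varepsilon$. This forces one to verify that the penalisation $(F_n)_\varepsilon$ enters both the coupling and the Bismut--Elworthy estimates only through its $0$-dissipativity---never through its local Lipschitz or growth constants, which diverge as $n\to+\infty$. This is exactly the uniformity built into Lemma \ref{basic coupling estimates} and into the dissipative flow estimate $|\nabla_x X_s|\le e^{-\eta s}$, and it is what later permits passing to the iterated limit $\lim_{\alpha,n,\varepsilon}$ in Proposition \ref{prop2:report2}. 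Granting this, the three displayed inequalities follow.
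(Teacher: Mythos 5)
Your proposal is correct and takes essentially the same route as the paper: Girsanov linearisation together with the uniform bound on the terminal datum for the first estimate, and then --- once the Markovian representation $\overline{Z}_s^{t,x,\alpha,n,\varepsilon}=\transpose\nabla\overline{w}_T^{\alpha,n,\varepsilon}(s,X_s^{t,x,n,\varepsilon})\sigma$ from Theorem 4.2 of \cite{FUHRMAN_TESSITORE_BISMUT_ELWORTHY} is available --- the Bismut--Elworthy formula for the gradient bound and the extended coupling estimate (Lemma \ref{approximation upsilon}, Remark \ref{basic coupling estimate etendue double suite} and Lemma \ref{basic coupling estimates}) for the exponential contraction, which is precisely the method of \cite{LARGE_TIME_BEHAVIOUR_HU_MADEC_RICHOU} that the paper invokes for the second and third bounds. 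The only point to reorder: your step (iii) already writes $\gamma_r^{\alpha,n,\varepsilon}=\Upsilon(r,X_r^{0,x,n,\varepsilon})$ as a deterministic continuous function of the state, which presupposes the $\mathscr{C}^1$ representation of $\overline{Z}$ that you invoke only in step (ii); that representation should be established first (as the paper does), after which your argument is complete.
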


Let us conclude the proof. From Proposition \ref{lemm estimates Y alpha n epsilon}, we derive, by the same arguments as in \cite{LARGE_TIME_BEHAVIOUR_HU_MADEC_RICHOU} that there exists $L^{\alpha,n,\varepsilon} \in \R$ such that $\forall x \in \R^d$,
\begin{align}\label{resultat avec alpha n varepsilon}
|\overline{w}_T^{\alpha,n,\varepsilon}(0,x) - L^{\alpha,n,\varepsilon}| \leq Ce^{-\hat{\eta}T}.
\end{align}
Therefore,
\begin{align*}
\left\{
\begin{array}{l}
\overline{w}_T^{\alpha,n,\varepsilon}(0,x) \leq Ce^{- \hat{\eta}T} + L^{\alpha,n,\varepsilon},\\
L^{\alpha,n,\varepsilon} \leq Ce^{- \hat{\eta}T} + \overline{w}_T^{\alpha,n,\varepsilon}(0,x),
\end{array}
\right.
\end{align*}
which implies by \eqref{convergence vers wT} that
\begin{align*}
\left\{
\begin{array}{l}
w_T(0,x) \leq Ce^{- \hat{\eta}T} + \liminf_{\alpha,n,\varepsilon}L^{\alpha,n,\varepsilon},\\
\limsup_{\alpha,n,\varepsilon}L^{\alpha,n,\varepsilon} \leq Ce^{- \hat{\eta}T} + w_T(0,x).
\end{array}
\right.
\end{align*}
Then, 
\begin{align*}
\limsup_{\alpha,n,\varepsilon}L^{\alpha,n,\varepsilon} \leq 2Ce^{-\hat{\eta}T} + \liminf_{\alpha,n,\varepsilon} L^{\alpha,n,\varepsilon}.
\end{align*}
Letting $T \rightarrow + \infty$ implies that there exists $L \in \R$ such that
\begin{align*}
\lim_{\alpha, n , \varepsilon} L^{\alpha,n,\varepsilon} = L.
\end{align*}
Coming back to \eqref{resultat avec alpha n varepsilon} and passing to the limit gives us the result:
\begin{align*}
|w_T(0,x) -L| \leq  Ce^{- \hat{\eta}T}.
\end{align*}
\end{proof}

\begin{rema}\label{add1:report1}
As in \cite{LARGE_TIME_BEHAVIOUR_HU_MADEC_RICHOU}, in this paper we need to suppose that $\sigma$ is a constant matrix in order to establish the a priori estimates in Lemma \ref{lemm estimates Y alpha n epsilon}.
\end{rema}

\subsection{Proofs of the propositions}

{\bf Proof of Proposition \ref{prop1:report2}. }
We recall that for all $T,S \geq 0$, $u_T$ is the unique solution of 
\begin{align*}
\left\{ 
\begin{array}{ll}
\frac{\partial u_T(t,x)}{\partial t} + \LL u_T(t,x) + f(x,\nabla u_T(t,x)\sigma) = 0, & \forall (t,x) \in [0,T] \times G,\\
\frac{\partial u_T(t,x)}{\partial n} + g(x) =0,& \forall (t,x) \in [0,T]\times \partial G,\\
u_T(T,x) = h(x),&\forall x \in G,
\end{array}
\right.
\end{align*}
and that 
 $u_{T+S}$ is the unique solution of 
\begin{align*}
\left\{ 
\begin{array}{ll}
\frac{\partial u_{T+S}(t,x)}{\partial t} + \LL u_{T+S}(t,x) + f(x,\nabla u_{T+S}(t,x)\sigma) = 0, & \forall (t,x) \in [0,T+S] \times G,\\
\frac{\partial u_{T+S}(t,x)}{\partial n} + g(x) =0,& \forall (t,x) \in [0,T+S]\times \partial G,\\
u_{T+S}(T+S,x) = h(x),&\forall x \in G.
\end{array}
\right.
\end{align*}
By uniqueness of viscosity solutions, it implies that $u_T(0,x)=u_{T+S}(S,x)$, for all $x \in \G$, and then,
$$w_T(0,x) = w_{T+S}(S,x), \forall x \in \G.$$

{\bf Proof of Proposition \ref{prop2:report2}.} It suffices to check that the Assumptions (A2) and (A3)
in Lemma 2.3 of \cite{BRIAND_HU_STABILITY_BSDE}
are verified for  the equations \eqref{equation verifiee par Y alpha, n , epsilon} and \eqref{equation verifiee par w}. Let us first give precisely (A2) and (A3).

Assumption (A2) of \cite{BRIAND_HU_STABILITY_BSDE} is:
\begin{align*}
&\forall z_1, z_2 \in \R^{1\times d}, |\widetilde{f}^{\alpha,n,\varepsilon}(s,z_1) - \widetilde{f}^{\alpha,n,\varepsilon}(s,z_2)| \leq C|z_1 - z_2|,\\
&\E \left[ \int_t^T |\widetilde{f}^{\alpha, n,\varepsilon}(s,0)|^2 \der s \right] \leq C;
\end{align*}
and Assumption (A3) is:
\begin{align*}
\lim_{\alpha, n, \varepsilon }\E&\left[ \left(\int_s^T (\widetilde{f}^{\alpha ,n,\varepsilon}(r,Z_r^{T,t,x}-Z_r^{t,x}) - \widetilde{f}(r,Z_r^{T,t,x}-Z_r^{t,x})) \der r\right)^2 \right]=0,
\end{align*}
\begin{align*}
\lim_{\alpha, n, \varepsilon } \E \left[ |(h-v)_{\R^d}(X_{T}^{t,x,n,\varepsilon}) - (h-v)_{ \R^d}(0,X_{T}^{t,x}) |^2 \right] = 0.
\end{align*}

It is easy to show that (A2) is satisfied. It remains to show 
that  (A3)  is also satisfied. We have, $\forall  s \in [t,T]$,
\begin{align*}
\E&\left[ \int_s^T |\widetilde{f}^{\alpha ,n,\varepsilon}(r,Z_r^{T,t,x}-Z_r^{t,x}) - \widetilde{f}(r,Z_r^{T,t,x}-Z_r^{t,x})|^2 \der r \right] \\
&= \E \left[\int_s^T |f_{\R^d}(X_r^{t,x,n,\varepsilon} , Z_r^{T,t,x} - Z_r^{t,x} + \widetilde{Z}_r^{t,x,\alpha,n,\varepsilon}) - f_{\R^d}(X_r^{t,x,n,\varepsilon},\widetilde{Z}_r^{t,x,\alpha,n,\varepsilon})\right. \\
&~~~~~~ \left. - f_{\R^d}(X_r^{t,x} , Z_r^{T,t,x})  + f_{\R^d}(X_r^{t,x},Z_r^{t,x})|^2 \der r \right ]\\
&\leq C\E\left[ \int_s^T |f_{\R^d}(X_r^{t,x,n,\varepsilon} , Z_r^{T,t,x} - Z_r^{t,x} + \widetilde{Z}_r^{t,x,\alpha,n,\varepsilon}) - f_{\R^d}(X_r^{t,x,n,\varepsilon} , Z_r^{T,t,x}) |^2 \der r \right]\\
&~~~~+ C\E\left[ \int_s^T |f_{\R^d}(X_r^{t,x,n,\varepsilon} , Z_r^{T,t,x}) - f_{\R^d}(X_r^{t,x} , Z_r^{T,t,x})|^2  \der r \right]\\
&~~~~ + C\E\left[ \int_s^T | f_{\R^d}(X_r^{t,x,n,\varepsilon},\widetilde{Z}_r^{t,x,\alpha,n,\varepsilon}) - f_{\R^d}(X_r^{t,x,n,\varepsilon} , Z_r^{t,x}) |^2 \der r \right] \\
&~~~~+ C\E\left[ \int_s^T |f_{\R^d}(X_r^{t,x,n,\varepsilon} , Z_r^{t,x}) - f_{\R^d}(X_r^{t,x} , Z_r^{t,x})|^2 \der r \right]\\
& \leq C\E \left[ \int_s^T |\widetilde{Z}_r^{t,x,\alpha,n,\varepsilon} - Z_r^{t,x}|^2 \der r \right] + C\E \left[ \int_s^T|f_{\R^d}(X_r^{t,x,n,\varepsilon} , Z_r^{T,t,x}) - f_{\R^d}(X_r^{t,x} , Z_r^{T,t,x})|^2  \der r \right] \\
& ~~~~+ C \E \left[ \int_s^T |f_{\R^d}(X_r^{t,x,n,\varepsilon} , Z_r^{t,x}) - f_{\R^d}(X_r^{t,x} , Z_r^{t,x})|^2 \der r \right].\\
\end{align*}
Then \eqref{convergence Ztilde vers Z} implies that the first term converges toward $0$. Furthermore, since\\ $\lim_{\varepsilon , n} \E \sup_{t \leq s \leq T} |X_s^{t,x,n,\varepsilon} - X_s^{t,x}|^2  = 0$, we have 
\begin{align*}
|X_s^{t,x,n,\varepsilon} - X_s^{t,x}| \overset{\Pb \otimes \der t}{\longrightarrow} 0, \text{ as } \varepsilon \rightarrow 0, n \rightarrow + \infty,
\end{align*}
and
\begin{align*}
|f_{\R^d}(X_r^{t,x,n,\varepsilon} , Z_r^{T,t,x}) - f_{\R^d}(X_r^{t,x} , Z_r^{T,t,x})|^2 \leq C(1+|Z_r^{T,t,x}|^2)
\end{align*}
which shows the uniform integrability of $|f_{\R^d}(X_r^{t,x,n,\varepsilon} , Z_r^{T,t,x}) - f_{\R^d}(X_r^{t,x} , Z_r^{T,t,x})|^2$. Therefore, the second term converges toward $0$. The same argument applied to the third term shows that this last term also converges toward $0$.

Furthermore, by continuity and boundedness of $(h-v)_{\R^d}$ , we deduce that:
\begin{align*}
\lim_{\alpha, n, \varepsilon } \E \left[ |(h-v)_{\R^d}(X_{T}^{t,x,n,\varepsilon}) - (h-v)_{ \R^d}(0,X_{T}^{t,x}) |^2 \right] = 0.
\end{align*}
Thus (A3) is satisfied. Therefore, by Lemma 2.3 of \cite{BRIAND_HU_STABILITY_BSDE} applied to \eqref{equation verifiee par Y alpha, n , epsilon} and \eqref{equation verifiee par w}, we obtain:
\begin{align*}
\lim_{\alpha, n ,\varepsilon} \overline{Y}_t^{T,t,x,\alpha, n,\varepsilon } = w_{T, \R^d}(t,x).
\end{align*}
Thus, $\forall x \in \G$,
\begin{align*}
\lim_{\alpha, n,\varepsilon } \overline{Y}_t^{T,t,x,\alpha, n,\varepsilon} = w_{T}(t,x).
\end{align*}

{\bf Proof of Proposition \ref{lemm estimates Y alpha n epsilon}.}
The first estimate is a direct consequence of Girsanov's theorem. Indeed, we have,
\begin{align*}
\overline{Y}_0^{T,x,\alpha,n,\varepsilon} &= (h-v)_{\R^d}(X_{T}^{x,n,\varepsilon}) - \int_0^{T} \overline{Z}_r^{T,x,\alpha,n,\varepsilon} \der W_r  \\
&~~~~+ \int_0^{T} \left[ f_{\R^d}(X_r^{x,n,\varepsilon} , \overline{Z}_r^{T,x,\alpha,n,\varepsilon} + \widetilde{Z}_r^{x,\alpha,n,\varepsilon}) - f_{\R^d}(X_r^{x,n,\varepsilon},\widetilde{Z}_r^{x,\alpha,n,\varepsilon})  \right] \der r\\
&=w_{T}(0,X_{T}^{x,n,\varepsilon}) - \int_0^{T} \overline{Z}_r^{T,x,\alpha,n,\varepsilon} (-\beta_r \der r + \der W_r),
\end{align*}
where 
\begin{align*}
\beta_r := 
\left\{
\begin{array}{ll}
\frac{ (f_{\R^d}(X_r^{x,n,\varepsilon} , \overline{Z}_r^{T,x,\alpha,n,\varepsilon} + \widetilde{Z}_r^{x,\alpha,n,\varepsilon}) - f_{\R^d}(X_r^{x,n,\varepsilon},\widetilde{Z}_r^{x,\alpha,n,\varepsilon}))\transpose( \overline{Z}_r^{T,x,\alpha,n,\varepsilon})}{| \overline{Z}_r^{T,x,\alpha,n,\varepsilon}|^2}, & \text{if }  \overline{Z}_r^{T,x,\alpha,n,\varepsilon} \neq 0,\\
0, & \text{otherwise.}
\end{array}
\right.
\end{align*}
Since $\beta$ is a progressively measurable and bounded process, there exists a new probability equivalent to $\Pb$, $\Q^{T, \alpha , n ,\varepsilon}$ under which $(W_s - \int_0^{s} \beta_r \der r)_{r \in [0,T]} $ is a Brownian motion. Therefore, thanks to estimate (\ref{estimee 1 sur w_T}):
\begin{align*}
|\overline{Y}_0^{T,x,\alpha, n,\varepsilon}| &\leq \E^{Q^{T, \alpha , n,\varepsilon}}| w_{T}(0 , X_{T}^{x, n,\varepsilon}) |\\
& \leq C.
\end{align*}

Let us establish the second and third inequality of the proposition. First we notice that thanks to equation (\ref{changement de temps w}), $\forall 0 \leq T' < T$, $\forall s \in [t,T']$, 
\begin{align*}
\overline{Y}_s^{T,t,x,\alpha,n,\varepsilon} &= \overline{w}^{\alpha,n,\varepsilon}_{T, \R^d}(T',X_{T'}^{t,x,n,\varepsilon}) - \int_s^{T'} \overline{Z}_r^{T,t,x,\alpha,n,\varepsilon} \der W_r \\
&~~~+ \int_s^{T'} \left[ f_{\R^d}(X_r^{t,x,n,\varepsilon} , \overline{Z}_r^{T,t,x,\alpha,n,\varepsilon} + \widetilde{Z}_r^{t,x,\alpha,n,\varepsilon}) - f_{\R^d}(X_r^{t,x,n,\varepsilon},\widetilde{Z}_r^{t,x,\alpha,n,\varepsilon})  \right] \der r \\
&= \overline{w}^{\alpha,n,\varepsilon}_{T-T' , \R^d}(0,X_{T'}^{t,x,n,\varepsilon}) + \int_s^{T'} \widetilde{f}^{\alpha , n,\varepsilon }(s, \overline{Z}_r^{T,t,x,\alpha,n,\varepsilon} ) \der r - \int_s^{T'} \overline{Z}_r^{T,t,x,\alpha,n,\varepsilon} \der W_r.
\end{align*}

We recall that we have the following representation:
 \begin{align*}
\widetilde{Z}_s^{t,x, \alpha, n,\varepsilon } = \nabla \widetilde{v} (X_s^{t,x, n,\varepsilon}) \sigma.
\end{align*}
Furthermore, by Theorem 4.2 (or Theorem 4.2 in \cite{FUHRMAN_TESSITORE_BISMUT_ELWORTHY}), as $\overline{w}_T^{\alpha , n,\varepsilon}(t,x) := \overline{Y}_{t}^{T,t,x,\alpha ,n,\varepsilon}$, $(x \mapsto \overline{u}_T^{\alpha , n,\varepsilon}(t,x))$ is continuously differentiable for all $t \in [0,T[$ and  $\forall s \in [t,T[$,
\begin{align*}
\overline{Z}_s^{t,x, \alpha , n,\varepsilon} = \nabla \overline{w}_T^{\alpha , n,\varepsilon }(s,X_s^{t, x , n,\varepsilon }) \sigma.
\end{align*} 
 
Therefore, we can apply the same method as exposed in \cite{LARGE_TIME_BEHAVIOUR_HU_MADEC_RICHOU} to obtain the second and third estimate.

\section{Application to an ergodic control problem}
In this section, we show how we can apply our results to an ergodic control problem. We assume that Hypotheses \ref{hypo SDE reflected} and \ref{hypo_G} hold. Let $U$ be a separable metric space. We define a control $a$ as an $(\F_t)_{t \geq 0}$-predictable $U$-valued process. We will assume the following.
\begin{hypo}\label{hypo optimization control}
The functions $R : U \rightarrow \G$, $L : \G \times U \rightarrow \R$ and $h_0 : \G \rightarrow \R$ are measurable and satisfy, for some $C > 0$,
\begin{enumerate}
\item $|R(a)| \leq C,~~~\forall a \in U$.
\item $L(\cdot,a)$ is continuous in $x$ uniformly with respect to $a \in U$.  Furthermore $|L(x,a)| \leq C,~~~\forall x \in \G,  \forall a \in U$.
\item $h_0(\cdot)$ is continuous.
\item $g \in \mathscr{C}^1(\G)$.
\end{enumerate}
\end{hypo}
We denote by $(X_t^x)_{t \geq 0}$ the solution of \eqref{SDE_reflected}. Given an arbitrary control $a$ and $T>0$, we introduce the Girsanov density
\begin{align*}
\rho_T^{x,a} = \exp\left( \int_0^T \sigma^{-1}R(a_s) \der W_s - \frac{1}{2}\int_0^T |\sigma^{-1} R(a_s)|^2 \der s \right)
\end{align*}
and the probability $\Pb_T^a = \rho_T^a \Pb$ on $\F_T$. We introduce two costs. The first one is the cost in finite horizon:
\begin{align*}
J^T(x,a) := \E^{a,T}\left[\int_0^T L(X_s^x,a_s) \der s + \int_0^T g(X_s^x) \der K_s^x \right] + \E^{a,T} h_0(X_T^x),
\end{align*}
where $\E^{a,T}$ denotes the expectation with respect to $\Pb_T^{a}$. The associated optimal control problem is to minimize the cost $J^T(x,a)$ over all controls $a^T : \Omega \times [0,T] \rightarrow U$, progressively measurable. The second one is called the ergodic cost and is the time averaged finite horizon cost:
\begin{align*}
J(x,a) := \limsup_{T \rightarrow + \infty} \frac{1}{T} \E^{a,T} \left[ \int_0^T L(X_s^x,a_s) \der s + \int_0^T g(X_s^x) \der K_s^x \right].
\end{align*}
The associated optimal control problem is to minimize the cost $J(x,a)$ over all controls $a : \Omega \times [0,+ \infty[ \rightarrow + \infty$, progressively measurable.

We notice that $W_t^a = W_t - \int_0^t \sigma^{-1} R(a_s) \der s$ is a Brownian motion on $[0,T]$ under $\Pb_T^a$ and that
\begin{align*}
\der X_t^x = (b(X_t^x)+R(a_t))\der t + \sigma \der W_t^a+\nabla\phi(X_t^x)dK_t^x, ~~~ \forall t \in [0,T], 
\end{align*}
and this justifies our formulation of the control problem.

We want to show how our results can be applied to such an optimization problem to get an asymptotic expansion of the finite horizon cost involving the ergodic cost.

To apply our results, we first define the Hamiltonian in the usual way,
\begin{align}\label{hamiltonien}
f_0(x,z) = \inf_{a \in U}\left\{ L(x,a) + z \sigma^{1} R(a) \right\},
\end{align}
and we note that , if for all $x,z$ the infimum is attained in \eqref{hamiltonien}, then by the Filippov theorem (see \cite{MCSHANE_FILIPPOV_IMPLICIT_FUNCTION_LEMMA}), there exists a measurable function $\gamma : \G \times \R^{1 \times d}$ such that
\begin{align*}
f_0(x,z) = L(x,\gamma(x,z)) + z\sigma^{-1} R(\gamma(x,z)).
\end{align*}

\begin{lemm}
Under the above assumptions, the Hamiltonian $f_0$ satisfies assumptions on $f$ in Hypotheses \ref{hypo_path_dependent}, \ref{hypo BSDE markovien}, \ref{hypo EBSDE}, or \ref{hypo_second_troisieme_comportement}.
\end{lemm}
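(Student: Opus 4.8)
The plan is to verify the two structural properties that $f$ is required to have in each of the four hypotheses, namely uniform Lipschitz continuity in the $z$ variable and continuity in the $x$ variable for every fixed $z$; the remaining items in those hypotheses concern $b$, $g$, $\sigma$ and the terminal data, which are either imposed directly in Hypothesis \ref{hypo optimization control} or are unaffected by the definition of $f_0$. Throughout I would use the elementary inequality $|\inf_{a} F(a) - \inf_{a} G(a)| \le \sup_{a}|F(a) - G(a)|$, valid for bounded real-valued functions on $U$, which reduces both verifications to a pointwise-in-$a$ estimate. (Here I read the Hamiltonian with $\sigma^{-1}$, consistent with the Girsanov density and the formula $f_0(x,z)=L(x,\gamma(x,z))+z\sigma^{-1}R(\gamma(x,z))$.)

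First I would establish finiteness and Lipschitz continuity in $z$. Writing $F(a) = L(x,a) + z_1\sigma^{-1}R(a)$ and $G(a) = L(x,a) + z_2\sigma^{-1}R(a)$, the $L$-terms cancel in the difference, so the inequality above gives
\[
|f_0(x,z_1) - f_0(x,z_2)| \le \sup_{a \in U} |(z_1 - z_2)\sigma^{-1}R(a)| \le |\sigma^{-1}|\Big(\sup_{a \in U}|R(a)|\Big)\,|z_1 - z_2|.
\]
Since $R$ is bounded by Hypothesis \ref{hypo optimization control}(1) and $\sigma^{-1}$ is a fixed matrix, the right-hand side is $C|z_1 - z_2|$, which is exactly the Lipschitz condition required in \ref{hypo_path_dependent}(2), \ref{hypo BSDE markovien}(2), \ref{hypo EBSDE}(1) and \ref{hypo_second_troisieme_comportement}(3). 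The same bound, together with boundedness of $L$, shows $|f_0(x,z)| \le C(1+|z|)$, so the infimum is finite and $f_0$ is well defined.

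Next I would prove continuity of $f_0(\cdot,z)$ for fixed $z$. Now the $z\sigma^{-1}R(a)$ terms cancel, and the inequality yields
\[
|f_0(x,z) - f_0(y,z)| \le \sup_{a \in U} |L(x,a) - L(y,a)|.
\]
By Hypothesis \ref{hypo optimization control}(2), $L(\cdot,a)$ is continuous in $x$ uniformly with respect to $a$, i.e. a single modulus of continuity independent of $a$ controls $|L(x,a)-L(y,a)|$; hence the supremum tends to $0$ as $y \to x$, giving the continuity of $f_0(\cdot,z)$ demanded in each of the four hypotheses.

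I expect no serious obstacle here: the only point requiring care is the correct reading of ``continuous uniformly with respect to $a$'' as furnishing a single modulus of continuity $m$, which is what turns the pointwise-in-$a$ estimate into continuity of the supremum. I would finally remark that this same estimate bounds $|f_0(x,z)-f_0(y,z)|$ by the $z$-independent modulus $m(|x-y|)$, and since $m$ may be taken nondecreasing while $(1+|z|)|x-y| \ge |x-y|$, one obtains $|f_0(x,z)-f_0(y,z)| \le m((1+|z|)|x-y|)$; thus the stronger regularity of Hypothesis \ref{hypo unicite parabolic PDE}(2) holds for free should it be required.
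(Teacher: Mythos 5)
Your proof is correct. The paper itself does not argue this lemma at all: its ``proof'' is a citation to Lemma 5.2 of Fuhrman--Tessitore \cite{FUHRMAN_TESSITORE_BISMUT_ELWORTHY}, where the verification is carried out by essentially the same device you use, the elementary comparison $|\inf_a F(a)-\inf_a G(a)|\le\sup_a|F(a)-G(a)|$ applied once with the $L$-terms cancelling (Lipschitz in $z$, with constant $|\sigma^{-1}|\sup_a|R(a)|$) and once with the $z\sigma^{-1}R(a)$-terms cancelling (continuity in $x$ via the uniform-in-$a$ modulus of $L$). So the mathematical route is the standard one; what your write-up buys is self-containedness, plus two points the paper leaves implicit: you correctly read the misprinted $\sigma^{1}$ in the definition \eqref{hamiltonien} as $\sigma^{-1}$, consistent with the Girsanov formulation and the Filippov selection formula, and your closing observation that the same estimate yields $|f_0(x,z)-f_0(y,z)|\le m((1+|z|)|x-y|)$ is a genuine (small) addition: Hypothesis \ref{hypo unicite parabolic PDE}(2) is invoked as a separate standing assumption in the control-problem lemmas of Section 5, and your remark shows it is automatic for the Hamiltonian $f_0$, not an extra restriction. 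The only care point, which you flag yourself, is that ``continuous in $x$ uniformly with respect to $a$'' must furnish a single modulus; on the compact set $\G$ this modulus can moreover be taken continuous and nondecreasing, which is what your last remark needs.
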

\begin{proof}
See Lemma 5.2 in \cite{FUHRMAN_TESSITORE_BISMUT_ELWORTHY}.
\end{proof}
We recall the following results about the finite horizon cost:
\begin{lemm}
Assume that Hypotheses \ref{hypo SDE}, \ref{hypo_G}, \ref{hypo unicite parabolic PDE} and \ref{hypo optimization control} hold true. Then for arbitrary control $a^T : \Omega \times [0,T] \rightarrow U$,
\begin{align*}
J^T (x,a^T) \geq u(T,x),
\end{align*}
where $u(t,x)$ is the viscosity solution of 
\begin{align*}
\left\{ 
\begin{array}{ll}
\frac{\partial u(t,x)}{\partial t} = \LL u(t,x) + f_0(x,\nabla u(t,x)G) , & \forall (t,x) \in \R_+ \times G,\\
\frac{\partial u(t,x)}{\partial n} + g(x) = 0, &\forall (t,x) \in \R_+ \times \partial G, \\
u(0,x) = h_0(x),&\forall x \in G,
\end{array}
\right.
\end{align*}
Furthermore, if $\forall x,$ $z$ the infimum is attained in \eqref{hamiltonien} then we have the equality:
\begin{align*}
J^T(x, \overline{a}^T) = u(T,x),
\end{align*}
where $\overline{a}^T_t = \gamma(X_t^{x},\nabla u(t,X_t^x) \sigma)$.
\end{lemm}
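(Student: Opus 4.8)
The plan is to exploit the probabilistic representation of $u$ together with the defining infimum in \eqref{hamiltonien}, in the spirit of the standard BSDE synthesis argument. First I would recall that, by the existence result for the parabolic PDE and the time change relating the forward and backward equations, the solution admits the representation $u(T,x)=Y_0^{T,x}$, where $(Y^{T,x},Z^{T,x})$ solves the BSDE \eqref{BSDE} (at $t=0$) with generator $f_0$, terminal datum $h_0(X_T^x)$ and boundary term $\int_0^T g(X_r^x)\der K_r^x$; moreover, in the Markovian regime one has the Bismut--Elworthy type gradient representation $Z_s^{T,x}=\transpose\nabla u(T-s,X_s^x)\sigma$ (cf. \cite{FUHRMAN_TESSITORE_BISMUT_ELWORTHY}). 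The key structural point I would stress is that the reflected state process $X^x$ solving \eqref{SDE_reflected} does \emph{not} depend on the control $a$: the control enters only through the change of measure $\Pb_T^a$, so all these processes are fixed once and for all under $\Pb$.

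Next I would rewrite the BSDE under $\Pb_T^a$. Since $R$ is bounded, $W_r^a=W_r-\int_0^r\sigma^{-1}R(a_s)\der s$ is a Brownian motion under $\Pb_T^a$, and substituting $\der W_r=\der W_r^a+\sigma^{-1}R(a_r)\der r$ into \eqref{BSDE} at $s=0$ yields
\begin{align*}
Y_0^{T,x}=h_0(X_T^x)+\int_0^T\big[f_0(X_r^x,Z_r^{T,x})-Z_r^{T,x}\sigma^{-1}R(a_r)\big]\der r+\int_0^T g(X_r^x)\der K_r^x-\int_0^T Z_r^{T,x}\der W_r^a.
\end{align*}
By the very definition of the Hamiltonian, $f_0(x,z)-z\sigma^{-1}R(a)\le L(x,a)$ for every $a\in U$, so the bracketed integrand is dominated by $L(X_r^x,a_r)$. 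Taking $\E^{a,T}$ and using that $\int_0^\cdot Z_r^{T,x}\der W_r^a$ is a true $\Pb_T^a$-martingale gives $u(T,x)=Y_0^{T,x}\le J^T(x,a)$, the desired inequality for an arbitrary control.

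For the equality when the infimum in \eqref{hamiltonien} is attained, I would plug the feedback $\overline a_r^T=\gamma(X_r^x,\nabla u(T-r,X_r^x)\sigma)=\gamma(X_r^x,Z_r^{T,x})$ into the same computation. By the Filippov selection (see \cite{MCSHANE_FILIPPOV_IMPLICIT_FUNCTION_LEMMA}) this $\overline a^T$ is progressively measurable and $U$-valued, hence admissible, and by construction it realizes the infimum pointwise, i.e. $f_0(X_r^x,Z_r^{T,x})=L(X_r^x,\overline a_r^T)+Z_r^{T,x}\sigma^{-1}R(\overline a_r^T)$. The inequality above then becomes an equality line by line, giving $J^T(x,\overline a^T)=u(T,x)$. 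I would emphasize that there is no circularity in defining the feedback, precisely because $X^x$ and $Z^{T,x}$ are intrinsic to the reference probability $\Pb$ and do not react to the choice of $a$.

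The main technical obstacle is the step discarding $\E^{a,T}\!\int_0^T Z_r^{T,x}\der W_r^a$: one must verify that this stochastic integral is a genuine martingale, not merely a local one, under the new measure. I would handle this using the boundedness of $R$ (so that the Girsanov density $\rho_T^a$ has moments of all orders) together with the square-integrability — indeed the BMO property, stemming from the bounded terminal value and the Lipschitz generator — of $Z^{T,x}$; a H\"older argument then yields $\E^{a,T}\!\int_0^T|Z_r^{T,x}|^2\der r<\infty$, which suffices to cancel the martingale term. A localization along exit times of increasing balls would serve as a fallback should the uniform integrability be delicate.
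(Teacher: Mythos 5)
Your proof is correct and takes essentially the same approach as the paper, which omits the argument entirely by citing Theorem 7.1 of \cite{FUHRMAN_HU_TESSITORE_ERGODIC_BSDE}: that cited proof is precisely your verification scheme — rewrite the BSDE under $\Pb_T^a$ via Girsanov, bound the generator by $L(X_r^x,a_r)$ using the infimum defining $f_0$, cancel the stochastic integral as a true $\Pb_T^a$-martingale, and use the Filippov selection for the equality case. The only (cosmetic) difference is that you express the optimal feedback through $Z_r^{T,x}$, i.e.\ via $\nabla u(T-r,X_r^x)\sigma$, which is the time-reversal--consistent form of the formula stated in the lemma.
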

\begin{proof}
The proof of this result is similar to the proof of Theorem 7.1 in \cite{FUHRMAN_HU_TESSITORE_ERGODIC_BSDE}, so we omit it.
\end{proof}
Similarly, for the ergodic cost we have the following result.
\begin{lemm}
Assume that Hypotheses \ref{hypo SDE reflected}, \ref{hypo_G}, \ref{hypo unicite parabolic PDE} and \ref{hypo optimization control} hold true, then for arbitrary control $a : \Omega \times [0,+\infty[ \rightarrow U$,
\begin{align*}
J(x,a) \geq \lambda,
\end{align*}
where $(v,\lambda)$ is the viscosity solution of 
\begin{align*}
\left\{
\begin{array}{l}
\LL v + f_0(x,\nabla v(x) \sigma) -\lambda = 0, ~~~~\forall x \in G,\\
\frac{\partial v(t,x)}{\partial n} + g(x) = 0, ~~~~\forall x \in \partial G .
\end{array}
\right.
\end{align*}
Furthermore, if $\forall x,$ $z$ the infimum is attained in \eqref{hamiltonien} then we have the equality:
\begin{align*}
J^T(x,\overline{a}) = \lambda,
\end{align*}
where $\overline{a}_t = \gamma(X_t^x,\nabla v(X_t^x) \sigma).$
\end{lemm}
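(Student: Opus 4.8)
The plan is to use the ergodic BSDE as a verification tool, exactly in the spirit of the finite-horizon lemma preceding it. Recall from Lemma~\ref{lemm_existence_EBSDE} that the EBSDE \eqref{EBSDE} (with $f=f_0$) has a solution $(Y^x,Z^x,\lambda)$ with $Y^x_s=v(X^x_s)$ and $|v|\le C$ on $\G$. First I would rewrite \eqref{EBSDE} under the control measure $\Pb_T^a$. Since $W_s = W^a_s + \int_0^s\sigma^{-1}R(a_r)\,\der r$ with $W^a$ a $\Pb_T^a$-Brownian motion, taking $t=0$ in \eqref{EBSDE} and substituting $Y_0^x=v(x)$, $Y_T^x=v(X_T^x)$ gives, after rearranging,
\begin{align*}
\int_0^T \big[f_0(X_s^x,Z_s^x) - Z_s^x\sigma^{-1}R(a_s)\big]\,\der s = v(x) - v(X_T^x) + \lambda T - \int_0^T g(X_s^x)\,\der K_s^x + \int_0^T Z_s^x\,\der W_s^a.
\end{align*}

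Next I would invoke the very definition of the Hamiltonian \eqref{hamiltonien}, namely $f_0(x,z)=\inf_{a\in U}\{L(x,a)+z\sigma^{-1}R(a)\}$, which yields the pointwise bound $f_0(X_s^x,Z_s^x)-Z_s^x\sigma^{-1}R(a_s)\le L(X_s^x,a_s)$ for every admissible control. Hence the left-hand side above is at most $\int_0^T L(X_s^x,a_s)\,\der s$, so that
\begin{align*}
\int_0^T L(X_s^x,a_s)\,\der s + \int_0^T g(X_s^x)\,\der K_s^x \geq \lambda T + v(x) - v(X_T^x) + \int_0^T Z_s^x\,\der W_s^a.
\end{align*}
Taking $\E^{a,T}$ on both sides and using that the stochastic integral has zero expectation, I obtain $\E^{a,T}\!\left[\int_0^T L(X_s^x,a_s)\,\der s + \int_0^T g(X_s^x)\,\der K_s^x\right] \geq \lambda T + v(x) - \E^{a,T}[v(X_T^x)]$. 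Since $|v|\le C$ on the bounded set $\G$, dividing by $T$ and letting $T\to+\infty$ (the term $(v(x)-\E^{a,T}[v(X_T^x)])/T$ vanishing) gives $J(x,a)\ge \lambda$.

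For the equality I would choose the feedback $\overline{a}_s=\gamma(X_s^x,Z_s^x)$, identifying $Z_s^x=\nabla v(X_s^x)\sigma$. By the Filippov selection the infimum in \eqref{hamiltonien} is then attained, so $f_0(X_s^x,Z_s^x)-Z_s^x\sigma^{-1}R(\overline{a}_s)=L(X_s^x,\overline{a}_s)$, turning the previous inequality into an equality. Repeating the computation verbatim with equalities produces $\tfrac1T\E^{\overline{a},T}[\,\cdot\,]=\lambda + (v(x)-\E^{\overline{a},T}[v(X_T^x)])/T$, whose limit is exactly $\lambda$; since this is a genuine limit, the $\limsup$ defining $J(x,\overline{a})$ equals $\lambda$.

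The main obstacle is justifying that $\int_0^T Z_s^x\,\der W_s^a$ is a true $\Pb_T^a$-martingale, so that its expectation vanishes, since $Z^x$ is a priori only in $L^2_{\mathscr{P},\text{loc}}$. I would handle this through the decomposition $Z_s^x=\nabla v^1(X_s^x)\sigma + Z_s^2$, where $\nabla v^1$ is bounded on the compact $\G$, together with the fact that the Girsanov density $\rho_T^a$ has moments of all orders (because $R$ is bounded); a localization argument combined with the square-integrability of $Z^2$ then upgrades the local martingale to a genuine one on each finite horizon. A secondary subtlety is the identification $Z_s^x=\nabla v(X_s^x)\sigma$ needed to make sense of the feedback $\overline{a}$, which relies on the $\mathscr{C}^1$-regularity and representation results already invoked in Section~3.
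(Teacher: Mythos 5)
Your proposal is correct and follows exactly the approach the paper has in mind: the paper omits this proof (as with the finite-horizon lemma, it defers to the verification argument of Theorem 7.1 in \cite{FUHRMAN_HU_TESSITORE_ERGODIC_BSDE}), which is precisely your computation --- rewrite the EBSDE under $\Pb_T^a$ via Girsanov, use the infimum defining $f_0$ to get the inequality (equality under the Filippov selection $\overline{a}$), take $\E^{a,T}$, divide by $T$ and let $T\rightarrow+\infty$. Your handling of the two technical points (the true-martingale property of $\int_0^T Z_s^x\,\der W_s^a$ via the decomposition $Z^x=\nabla v^1(X^x)\sigma+Z^2$ and the moments of $\rho_T^a$, and the identification $Z_s^x=\nabla v(X_s^x)\sigma$ needed for the feedback control) is also sound.
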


Finally, we apply our result to obtain the following theorem.
\begin{théo}
Assume that Hypotheses \ref{hypo SDE reflected}, \ref{hypo_G}, \ref{hypo unicite parabolic PDE} and \ref{hypo optimization control} hold true. Then, for any control $a : \Omega \times [0,T] \rightarrow U$, we have
\begin{align*}
\liminf_{T \rightarrow + \infty} \frac{J^T(x,a^T)}{T} \geq \lambda.
\end{align*}
Furthermore, if $\forall x,$ $z$ the infimum is attained in \eqref{hamiltonien} then
\begin{align*}
|J^T(x,\overline{a}^T) - J(x,\overline{a}) T - v(x) + L| \leq Ce^{-\hat{\eta} T}.
\end{align*}
\end{théo}
\begin{proof}
The proof is a straightforward consequence of the two previous lemmas above and of Theorem \ref{theoreme second and third behaviour}.
\end{proof}

{\bf Acknowledgement} The authors thank the two referees and the editor for their careful reading
and helpful suggestions which lead to a much improved version of this paper.


\newpage

\end{document}